\documentclass[11pt,a4paper]{article}
\usepackage{fullpage}

\usepackage[bitstream-charter]{mathdesign}

\DeclareSymbolFont{usualmathcal}{OMS}{cmsy}{m}{n}
\DeclareSymbolFontAlphabet{\mathcal}{usualmathcal}

\usepackage{xcolor}
\usepackage{hyperref}
\usepackage{graphicx}
\usepackage{physics}
\usepackage{amsthm}
\usepackage{algorithm}
\usepackage{algorithmicx}
\usepackage{algpseudocode}
\usepackage{enumitem}

\usepackage{mathabx}
\usepackage{enumitem}
\usepackage{subcaption}
\newcommand{\R}{\mathbb{R}}
\newcommand{\lift}{\mathrm{lift}}
\newcommand{\St}{\mathrm{St}}
\newcommand{\Gr}{\mathrm{Gr}}
\renewcommand{\grad}{\mathrm{grad}\,}
\newcommand{\cT}{\mathcal{T}}
\newcommand{\cZ}{\mathcal{Z}}
\newcommand{\cTG}{\mathcal{T/G}}
\newcommand{\Proj}{\mathrm{Proj}}
\newcommand{\DD}{\mathrm{D}}

\usepackage[top=12mm,bottom=12mm,left=30mm,right=30mm,head=12mm,includeheadfoot]{geometry}

\usepackage{tocloft}

\usepackage[
  backend=biber,
  eprint=true,
  doi=true,
  url=false,
  sorting=none
]{biblatex}
\AtEveryBibitem{\clearfield{issn}}
\theoremstyle{definition}
\newtheorem{theorem}{Theorem}

\theoremstyle{definition}
\newtheorem{proposition}[theorem]{Proposition}

\theoremstyle{definition}
\newtheorem{corollary}[theorem]{Corollary}

\newcommand{\Title}{Stochastic Optimization of Tree Tensor Networks}

\begin{document}

\begin{center}{\Large \textbf{
\Title
\\
}}\end{center}

\begin{center}\textbf{
Marius Willner\textsuperscript{12$\star$},
Maximilian Scharf\textsuperscript{23$\dagger$},
Andr\'e Uschmajew\textsuperscript{14},\\
Timo Felser\textsuperscript{2}
and Marco Trenti\textsuperscript{2}
}\end{center}

\begin{center}
{\bf 1} Institute of Mathematics, University of Augsburg, 86159 Augsburg, Germany
\\
{\bf 2} Tensor AI Solutions GmbH, 89284 Pfaffenhofen an der Roth, Germany
\\
{\bf 3} Institute for Complex Quantum Systems, Ulm University, 89081 Ulm, Germany
\\
{\bf 4}
Centre for Advanced Analytics and Predictive Sciences, University of Augsburg, \\86159 Augsburg, Germany
\\[\baselineskip]

$\star\dagger$ These authors contributed equally to this work.
\\
$\star$ \href{mailto:marius.willner@uni-a.de}{\small marius.willner@uni-a.de}\,,\quad
$\dagger$ \href{mailto:maximilian.scharf@tensor-solutions.com}{\small maximilian.scharf@tensor-solutions.com}
\end{center}

\section*{Abstract}
\textbf{\boldmath{%
Tensor networks, originally developed for quantum many-body physics, are promising models for machine learning. We derive stochastic Riemannian optimizers for tree tensor networks (TTNs) on both their parameter and quotient manifolds, including adaptive and learning-rate-free schemes suitable for minibatch training. Using a hybrid CNN–TTN architecture, we evaluate the methods on Fashion-MNIST, CIFAR10, and Imagenette. The proposed optimizers achieve predictive performance comparable to unconstrained optimization while enabling numerically stable downstream compression. 
}}

\vspace{10pt}
\hrule
\tableofcontents

\section{Introduction}
\label{sec:intro}

Modern machine learning relies overwhelmingly on first-order optimization, and in practice this means \emph{stochastic} gradient descent (SGD) rather than full-batch gradient descent. Given a loss that decomposes as an expectation over data, $f(\theta) = \frac{1}{m}\sum_{k=1}^{m}\ell(\theta;x^k, y^k)$, the exact gradient $\nabla f$ costs $O(m)$ per step, which is prohibitive for large $m$. SGD replaces it with an unbiased estimate computed on a minibatch $\mathcal{B}$,
\[
  \nabla f(\theta)\;\approx\;\frac{1}{|\mathcal{B}|}
  \sum_{k\in\mathcal{B}}\nabla_\theta \,\ell(\theta;x^k, y^k),
\]
trading gradient accuracy for a large gain in per-step cost~\cite{robbins1951,bottou2018}. Beyond efficiency, the injected gradient noise has genuine optimization value: it helps escape saddle points and sharp minima, and is widely associated with improved generalization in overparameterized models~\cite{keskar2017,jin2017}. Adaptive variants such as ADAM~\cite{kingma2015} or MUON~\cite{Keller2024} and distance-over-gradient schemes~\cite{Ivgi2023}
refine the basic method, and its convergence behavior for convex and nonconvex objectives is by now well characterized~\cite{ghadimi2013}. Together these properties have made stochastic first-order optimization the default training paradigm across essentially all of large-scale machine learning.

Tensor networks---matrix product states (MPS), tree tensor networks (TTN), and related structures developed for quantum many-body physics~\cite{white1992, FelserMontangero2026}---have enabled simulations of challenging higher-dimensional many-body and lattice-gauge systems \cite{Felser2020QED,Felser2021HighDim,Magnifico2021QED}, and have subsequently been repurposed as trainable models for supervised classification and generative modeling~\cite{stoudenmire2016,han2018}. Applications of TN-based learning already include high-energy physics data analysis \cite{felser2020quantuminspired, Borella_2025} and, more recently, robust and compressible classification of synthetic-aperture-radar data \cite{scharf2026quantuminspiredrobustscalablesar}.

The canonical optimizer inherited from physics is the density-matrix renormalization group (DMRG), a deterministic algorithm that sweeps across the network, locally optimizing one or two tensors at a time to near-optimality~\cite{white1992}. 
DMRG is extraordinarily effective for computing ground states, but its central assumption---a fixed objective whose local environment can be formed and factorized exactly at each step---sits uneasily with the machine-learning setting, where the objective is an empirical average over a large, shuffled dataset. Evaluating or differentiating that average exactly reintroduces the same $O(m)$ cost that motivates stochastic methods in the first place~\cite{robbins1951,bottou2018}, while the nonconvex, multilinear loss landscapes that tensor networks induce are precisely the regime in which gradient noise aids optimization and generalization~\cite{ghadimi2013,jin2017,keskar2017}. 

Stochastic gradient methods address both issues directly, and a growing body of work trains tensor-network models this way. The dominant approach obtains gradients by \emph{automatic differentiation}: the gradient of the loss with respect to each core tensor is assembled by backpropagation through the network contraction and passed to a generic stochastic optimizer, as in~\cite{efthymiou2019} and in recent training libraries built on deep-learning frameworks. This route is convenient and integrates tensor-network layers end-to-end with conventional neural components~\cite{stoudenmire2016,efthymiou2019,Hao2024}, but it treats the network as a generic parameterized function and ignores the geometric structure of the underlying decomposition. A second, smaller line of work instead respects that structure, performing Riemannian optimization on the fixed-rank manifold of tensor-train models~\cite{novikov2015} and TTNs~\cite{DaSilva2015,Hauru2021,Willner2025}; such methods, however, have largely been developed in the full-batch context. Recent advances in the closely related field of orthogonal optimization \cite{Mataigne2026, Javaloy2026, Li2026}, such as the POGO algorithm \cite{Javaloy2026}, serve as a basis to extend on those methods.

In this paper, we work out the forms of the stochastic algorithms on the manifolds of TTNs. Rather than indiscriminately using stochastic optimizers originally meant for Euclidean optimization,
we derive the Riemannian gradient of the loss in terms of the network's tensors and its canonical gauge structure, yielding explicit,
minibatch-computable update rules that respect the TTN (quotient) geometry. Crucially, we use that the TTN parameter space factors as a Cartesian product of manifolds, enabling adaptive Riemannian optimization. Even though the TTN quotient does not exhibit a product structure, we show that it shares its geodesic spray with a Cartesian product of Grassmann manifolds. We use this to theoretically back up our considerations and to conclude about the exponential map and Riemannian distances, which are central components of our stochastic optimizers. 
Furthermore, we devise a novel hybrid CNN-TTN architecture, that allows the numerical evaluation of the resulting algorithms on large-scale datasets, such as CIFAR10 and Imagenette:
they perform competitively with unconstrained adaptive optimizers, but produce final iterates that conform significantly better to the hierarchical structure of the TTN. When used for downstream compression tasks, deep networks optimized unconstrainedly break down completely, whereas deep TTNs trained with our methods react as expected.

The work is organized as follows: in Sec.~\ref{sec:preliminaries}, we formally introduce the TTN, and the geometric properties of its parameter and quotient space, as well as adaptive and distance-over-gradients schemes for general manifolds; in Sec.~\ref{sec:tool} 
we discuss the application of those to TTNs as our main contribution; 
in Sec.~\ref{sec:algorithms} we provide pseudocode and protocols for the implementation of the different SGD flavours; in Sec.~\ref{sec:experiments} we test the developed algorithms on computer vision benchmarks by using a CNN feature extractor and the TTN classifier; finally, in Sec.~\ref{sec:conclusions} we discuss the overall SGD picture for TTNs and their possible applications, open problems, and extensions for ML and beyond. 

\section{Preliminaries}\label{sec:preliminaries}
\subsection{Optimization on TTNs}
In this section, we will introduce general tree tensor networks, orthogonal TTN, and some important geometrical properties of TTN that are needed to develop Riemannian optimization algorithms. For a more detailed derivation and proofs see~\cite{Uschmajew:2013, Willner2025}.
\subsubsection{Tree tensor networks}
We will start with a definition of TTNs and of the notation that will be used throughout this work. Let $N=\{1,...,n\}$ be a set of labels for the \emph{physical} indices $i_1,...,i_n$ with dimensions $d_1,...,d_n$, that can be interpreted as the input of the tensor networks considered here. We will assume that $n$ is even for simplicity, but the definition can also be extended to an odd number of inputs. The tensor networks we consider here have an additional index $j$ with dimension $d_{\text{out}}$, which corresponds to the output.

A tree tensor network (TTN) is a tuple $\theta=(B^{(t)})_{t\in T}$ of order three \emph{core tensors} $B^{(t)}$, that are assembled through a rooted binary tree graph and labeled with elements $t\in T$. The set of labels $T$ is a subset of the power set $T\subset\mathcal{P}(N)$. Each tensor label is the set of all
labels of those physical indices that are children of the respective tensor. Due to the binary tree graph structure, each node $t\in T$ has two direct \emph{child nodes} $t_L, t_R\in T\cup N$ with $t=t_L\cup t_R$. 
Each core tensor connects three indices $v_{t_L}$, $v_{t_R}$ and $v_t$ of dimension $\chi_{t_L}$, $\chi_{t_R}$ and $\chi_T$, and it is canonnically identified with its matricization, meaning $B^{(t)}\in\R^{\chi_{t_L}\chi_{t_R}\times\chi_t}$. At the lowest layer of the TTN, each core tensor joins two consecutive physical indices and a single \emph{virtual} index $v_t$.
It holds that $v_{t_L} = i_{t_l}$, $v_{t_R} = i_{t_R}$, $\chi_{t_L} = d_{t_l}$, $\chi_{t_R} = d_{t_R}$. All other nodes, except for the root, join three virtual indices $v_{t_L}$, $v_{t_R}$ and $v_t$.
The root node $t = N$ joins the two final links, producing the outgoing index $v_t = j$, with dimension $\chi_t = d_\text{out}$. It will be explained in Section~\ref{sec: optimization} how TTNs may be employed as a machine learning model. Here we first focus on manifold properties of the TTN itself.

\subsubsection{Manifold structure}
\begin{figure}
    \centering
    \includegraphics[width=0.7\linewidth]{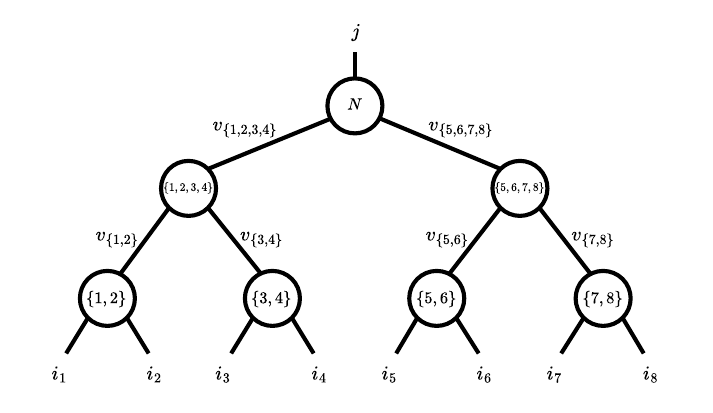}
    \caption{Graphical representation of a tree tensor network with $n=8$ physical indices $i_1,...,i_8$ and one "outgoing" index $j$. 
    The lowest layer of the TTN contains the nodes $\{1,2\},\{3,4\},\{5,6\},\{7,8\}$, that act on two consecutive physical indices each. The next layer are intermediate nodes $\{1,2,3,4\}, \{5,6,7,8\}$, followed by the root node with label $N=\{1,...,8\}$.}
    \label{fig:placeholder}
\end{figure}
The core tensors $\theta=(B^{(t)})_{t\in T}$ of the TTN parameterize the Euclidean space \cite{Uschmajew:2013}
\begin{equation*}
    \mathcal{E}:=\bigtimes_{t\in T}\R^{\chi_{t_L}\chi_{t_R}\times\chi_{t}}.
\end{equation*}
Furthermore, a TTN is always associated with a single tensor, which is obtained through the \emph{parameter map} $\tau:\mathcal{E}\rightarrow \R^{d_1\times\cdots\times d_n\times d_{\text{out}}}$ given by the contraction of all virtual indices $v_t$, $t\in T^*=T\setminus N$. The dimensions $(\chi_t)_{t \in T^*}$ of these virtual indices are referred to as the \emph{bond dimensions} of the TTN. They are subject to $\chi_t\leq\chi_{t_L}\chi_{t_R}$ \cite{Uschmajew:2013}, but may be chosen freely otherwise. Importantly, they govern the expressivity and computational complexity of the TTN as a model, and we assume them fixed throughout our considerations. A good initial guess for the bond dimensions can be found through the unsupervised construction algorithm \cite{Stoudenmire_2018}, they can be reduced using TTN compression algorithms \cite{scharf2026quantuminspiredrobustscalablesar} after training, or they may be chosen adaptively during optimization (see e.g. \cite{Ceruti2023}) in more advanced algorithms not covered in this work.

In practice, it is usually preferable to work with \emph{orthogonal} (also called isometrized) TTN, which define an important submanifold $\cT$ of general TTN. 
An orthogonal TTN $\theta\in\cT$ is defined in the same way as general TTN, with the additional restriction that all nodes $t\in T^*$ except for the root node are \emph{isometries}, i.e., elements of the \emph{Stiefel manifold} $B^{(t)}\in\St(\chi_{t_L}\chi_{t_R},\chi_t)$ satisfying
\begin{equation*}
    (B^{(t)})^T B^{(t)}=\mathbb{1}.
\end{equation*}
The parameter space of orthogonal TTN \cite{Willner2025} is given by
\begin{equation} \label{eq:orthogonal_ttn}
    \mathcal{T}=\left(\bigtimes_{t\in T^*}\St(\chi_{t_L}\chi_{t_R},\chi_t)\right)\times\R^{\chi_{N_L}\times\chi_{N_R}\times d_{\text{out}}}, 
\end{equation}
and its tangent space reads
\begin{equation*}
    T_\theta\cT = \left(\bigtimes_{t\in T^*}T_B\St(\chi_{t_L}\chi_{t_R},\chi_t)\right)\times\R^{\chi_{N_L}\chi_{N_R}\times d_{\text{out}}}.
\end{equation*}
It is the Cartesian product of the individual tangent spaces of the core tensors, where the tangent space of the Stiefel manifold $\St(n,k)$ is given by
\begin{equation*}
    T_X\St(n,k)=\{V\in\mathbb{R}^{n\times k}:X^TV+V^TX=0\}
\end{equation*}
and the tangent space of the root tensor is again a Euclidean space.

The space $\cT$ is an embedded submanifold of the Euclidean space $\mathcal{E}$. From the perspective of optimization, working with $\cT$ is preferable over $\mathcal{E}$ as it is on the one hand numerically more stable and on the other hand, we can exploit results of differential geometry to derive more sophisticated optimization algorithms. In the following, we will thus work exclusively with orthogonal TTN. 
Note that because of the \emph{gauge freedom}\footnote{This refers to the fact that at each virtual link one can insert an invertible matrix $A$ and its inverse $A^{-1}$, which can be used to alter the individual core tensors of the tensor network without affecting the result of the parameter map.}, the parameter map $\tau$ is not injective. Each TTN $\theta$ can be mapped to an orthogonal TTN $\theta'\in\cT$
through a process called \emph{isometrization}
, such that $\tau(\theta)=\tau(\theta')$. Therefore, considering only orthogonal TTN is not a restriction. 

\subsubsection{Quotient structure}\label{sec:quotientspace}

Note that even if we restrict the domain of $\tau$ to $\cT$, it is still not injective. This is due to the fact that at each virtual link $v_t$ of the TTN, one can insert an orthogonal matrix $A^{(t)}\in O(\chi_t)$ and its inverse $(A^{(t)})^T$, which can be contracted with the two core tensors connected by this link, respectively. After this contraction, the parameters of the two tensors changed, without breaking the isometry condition or changing the result of $\tau$ (see Fig.~\ref{fig:quotient}a for a graphical representation). Formally, this remaining gauge freedom is characterized by an action of the Lie group \cite{Uschmajew:2013}
\begin{equation}\label{eq: group G}
    \mathcal{G}=\{\mathcal{A}=(A^{(t)})_{t\in T^*}:A_t\in \mathrm O(\chi_t)\}.
\end{equation}
We can take the quotient of $\cT$ with respect to this Lie group to obtain the \emph{quotient manifold} $\cTG$ with associated quotient map $\pi:\cT\rightarrow\cTG$ that maps all parameters $\theta\in\mathcal{T}$ that are equivalent under $\mathcal{G}$ to the same representative $[\theta]\in\cTG$. 
Strictly speaking, in order for $\cTG$ to be a formal quotient manifold one has to restrict $\cT$ to only include tensors of full multilinear rank towards virtual links \cite{Uschmajew:2013, Willner2025}. As this does not affect our discussion, to lighten notation, we omit this technical detail. Moreover, in the following, whenever we write $\theta \in \cT$, we silently assume that $\theta$ fulfills those rank constraints.
If we push the parameter map to the quotient space $\hat\tau: \cTG\rightarrow\R^{d_1\times\cdots\times d_n\times d_{\text{out}}}$, satisfying $\tau = \hat \tau \circ \pi$, it finally becomes injective (see Fig.~\ref{fig:quotient}b) \cite{Willner2025}. This is desirable for optimization, however, it is important to note that the quotient space is an abstract space that is only useful in theory and the elements $[\theta]$ cannot directly be represented numerically. Nevertheless, it is still useful to study optimization on $\cTG$, which ultimately produces practical iteration rules in $\cT$.

\begin{figure}
    \centering
    \includegraphics[width=\linewidth]{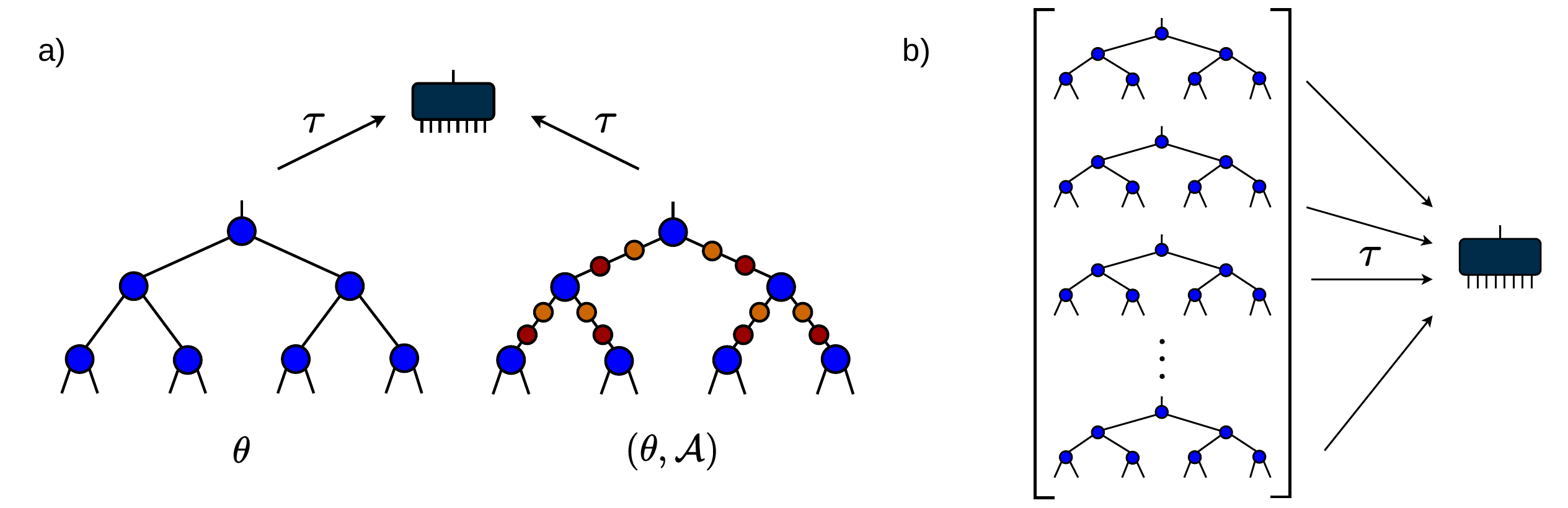}
    \caption{Illustration of the gauge freedom of orthogonal TTN and the corresponding quotient space. a) A given orthogonal TTN $\theta$ can be altered via the action of $\mathcal{A}\in \mathcal{G}$ by inserting pairs of orthogonal matrices $A^{(t)}$, $(A^{(t)})^T$ (red and orange, respectively) at each virtual link $v_t$, that change the original core tensors (blue) when contracted, without breaking the isometry constraint or changing the encoded full tensor $\Theta=\tau(\theta)$ (dark blue). b) All possible representatives $\theta\in\cT$ that are mapped to the same full tensor by $\tau$ are represented by a single abstract element $[\theta]\in\cTG$ in the quotient space.}
    \label{fig:quotient}
\end{figure}
Most importantly, we want to identify those directions in $T_{\theta}\cT$ that also lie in the tangent space of $T_{[\theta]}\cTG$, as those directions are invariant under the action of $\mathcal{G}$ and therefore influence $\Theta=\tau(\theta)$. The subspace containing the irrelevant directions is called the \emph{vertical space} $V_{\theta}\cT \subset T_{\theta}\cT$, whereas any complementary subspace in $T_{\theta}\cT$ is called the \emph{horizontal space} $H_{\theta}\cT \subset T_{\theta}\cT$. There are multiple valid choices for horizontal spaces of $\cT$, but we will only consider the simplest variant throughout, the \emph{Cartesian horizontal space}
\begin{equation} \label{eq:horizontal_space}
    H_\theta\cT = \left(\bigtimes_{t\in T^*}H_{B^{(t)}}\St(\chi_{t_L}\chi_{t_R},\chi_t)\right)\times\R^{\chi_{N_L}\chi_{N_R}\times d_{\text{out}}},
\end{equation}
that is again given by the Cartesian product of the individual horizontal spaces, where the canonical horizontal space of $\St(n,k)$ is given by
\begin{equation*}
    H_X\St(n,k)=\{V\in\R^{n\times k}:X^TV=0\}\,.
\end{equation*}

We can relate tangent vectors of the quotient space to vectors in the horizontal space through the \emph{horizontal lift} $\lift_{\theta}:T_{[\theta]}\cTG\rightarrow H_{\theta}\cT$. Furthermore, we equip $\cT$ with the metric
\begin{equation} \label{eq:euclidean_metric}
    \rho_\theta: T_\theta\cT \times T_\theta\cT \to \R, \quad \rho_\theta(\xi, \eta)= 
    \sum_{t \in T} \langle \delta B^{(t)} | \delta C^{(t)} \rangle =  
    \sum_{t \in T} \delta B^{(t)}_{v_{t_L}v_{t_R}v_{t}} \delta C^{(t)}_{v_{t_L}v_{t_R}v_{t}}, 
\end{equation}
where $\xi = (\delta B^{(t)})_{t \in T}$ and $\eta = (\delta C^{(t)})_{t \in T}$, respectively. Importantly, this also induces a Riemannian metric $\hat \rho$ on 
$T_{[\theta]}\cTG$ through horizontal lifts:
\begin{equation} \label{eq:quotient_metric}
    \hat \rho_{[\theta]}: T_{[\theta]}\cTG \times T_{[\theta]}\cTG \to \R, \quad \hat \rho_{[\theta]}(\hat \xi, \hat \eta) = \rho_\theta(\lift_\theta(\hat \xi), \lift_\theta(\hat \eta)).
\end{equation}
Thus both $(\cT, \rho)$ and $(\cTG, \hat \rho)$ are Riemannian manifolds.

\subsubsection{Projectors and retractions}\label{sec: Projectors and Retractions}
General vectors in Euclidean space $\mathcal{E}$ can be projected onto the tangent space and the horizontal space of $\cT$ with the orthogonal projectors $\Proj_{\theta}:\mathcal{E}\rightarrow T_{\theta}\cT$ and $\Proj^H_{\theta}:\mathcal{E}\rightarrow H_{\theta}\cT$, respectively. Both projectors leave the root tensor invariant and project the other core tensors to the respective spaces of the Stiefel manifold, with $P_X:\R^{n\times k}\rightarrow T_X\St(n,k)$ given by
\begin{equation*}
    P_{X}(V)=V-\frac{1}{2}X\left(X^TV+V^TX\right)
\end{equation*}
and $P_X^H:\R^{n\times k}\rightarrow H_X\St(n,k)$, given by
\begin{equation*}
    P_{X}^H(V)=(\mathbb{1}-XX^T)V.
\end{equation*}
Furthermore, we need a formalism to move from a point $\theta\in\cT$ to a new point $\theta'\in\cT$ along the direction of a tangent vector $\delta\theta\in T_{\theta}\cT$. This operation is called a \emph{retraction} $\mathrm R_{\theta}: T_{\theta}\cT\rightarrow\cT$, $(\theta,\delta\theta)\mapsto \mathrm R_{\theta}(\delta\theta)=\theta'$, and will be an important part of every iterative optimization algorithm. One possible retraction for $\cT$ we have already mentioned, namely the isometrization of TTN. After adding $\delta\theta$ to $\theta$ componentwise, the result is simply re-isometrized to project it back to $\cT$. Apart from this, retracting each tensor except for the root individually is also a retraction on $\cT$ since it is a Cartesian product of manifolds. Thus, we can use all retractions for the Stiefel manifold such as the Polar retraction $\mathrm R^{\text{polar}}_X:T_X\St(n,k)\rightarrow\St(n,k)$, $(X, \delta X)\mapsto UV^{T}$ with singular value decomposition of $X+\delta X=U\Sigma V^T$ or the QR-retraction $\mathrm R^{\text{QR}}_X:T_X\St(n,k)\rightarrow\St(n,k)$, $(X, \delta X)\mapsto Q$ with QR-decomposition of $X+\delta X=QR$.

A computationally viable tool to move tangent vectors from point $\theta \in \cT$ to a new point $\theta' \in \cT$ is \emph{vector transport}. A vector transport for tangents $\xi \in T_\theta\cT$ is given by
\begin{equation}\label{eq: projector and retraction full}
    \mathrm{V}^\cT_{\theta \to \theta'}: T_\theta\cT \to T_{\theta'}\cT, \quad\mathrm{V}_{\theta \to \theta'}(\xi) = \Proj_{\theta'}(\xi),
\end{equation}
and a suitable vector transport for $\xi \in H_\theta\cT$ reads
\begin{equation}\label{eq: projector and retraction quotient}
    \mathrm{V}^\cTG_{\theta \to \theta'}: H_\theta\cT \to H_{\theta'}\cT, \quad \mathrm{V}^H_{\theta \to \theta'}(\xi) = \Proj^H_{\theta'}(\xi).
\end{equation}
This second vector transport is also compatible with the quotient, i.e. $\DD \pi(\theta') \circ \mathrm{V}^\cTG_{\theta \to \theta'} \circ \lift_\theta$ is a vector transport on $\cTG$.
\subsubsection{Optimization function and Riemannian gradient}\label{sec: optimization}
The last ingredient we need is the Euclidean gradient and a mapping to the Riemannian gradient. In this work, we consider optimization functions $f:\cT\to\R$ of the form $f=\mathcal{L}\circ\tau$, with a loss function $\mathcal{L}:\R^{d_1\times\cdots\times d_n\times d_{\text{out}}}\to \R$. More specifically, we will do supervised learning following~\cite{stoudenmire2016, Novikov2017}, where a sample $x\in\R^n$ is encoded in a higher dimensional space through a \emph{feature map}
\begin{equation*}
    \phi(x) =  \phi^{(1)}(x_1)\otimes\cdots\otimes\phi^{(n)}(x_n),
\end{equation*}
that is composed of \emph{local} feature maps $\phi^{(i)}:\R\to\R^{d_i}$.
The model response is defined as
\begin{equation*}
    \phi^{(N)}_{j}=\tau(\theta)_{i_1,...,i_n,j}\, \phi(x)_{i_1,...,i_n}\,.
\end{equation*}
For a labeled data set $(x^k, y^k)_{k=1}^m$ and the squared error loss function, the optimization function takes the form $f(\theta) = \mathcal L(\tau(\theta)) = \tfrac{1}{m}\sum_k^m\|\Phi(x^k, \theta) - y^k\|^2$, where $\Phi(x, \theta) = \phi^{(N)} \in \R^{d_\text{out}}$.
To compute the Euclidean gradient, we can use the backpropagation algorithm that stores intermediate results in the \emph{forward propagation}, i.e., the computation of the loss, that are then reused in the gradient calculation which uses the chain rule to propagate the gradient back. 
In the forward propagation, we want to compute the response $\phi^{(N)}$ for a given TTN $\theta$ without actually constructing $\tau(\theta)$. To this end, we compute
\begin{equation}
    \phi^{(t)}_{v_t}=B^{(t)}_{v_{t_L}v_{t_R}v_t}\phi_{v_{t_L}}^{(t_L)}\phi_{v_{t_R}}^{(t_R)}
    \label{eq:forward}
\end{equation}
recursively $\forall t\in T$, starting from the leaf nodes (see Fig.~\ref{fig:backprop}a). All intermediate results $\phi^{(t)}$, $t\in T^*$ are stored as they are needed later for the backpropagation. The final result at the root node is equal to the response $\phi^{(N)}$. 
With this we can also compute the derivative of the loss with respect to the response
\begin{equation*}
    \bar\phi_j^{(N)}:=\frac{\partial\mathcal{L}}{\partial\phi^{(N)}_j}\,.
\end{equation*}
For the backpropagation, we can apply the chain rule to Eq.~\ref{eq:forward}, which yields
\begin{equation}
    \bar B^{(t)}_{v_{t_L}v_{t_R}v_t}:=\frac{\partial\mathcal{L}}{\partial B^{(t)}_{v_{t_L}v_{t_R}v_t}}=\bar\phi_{v_t}^{(t)}\phi_{v_{t_R}}^{(t_R)}\phi_{v_{t_L}}^{(t_L)}
    \label{eq:grad_B}
\end{equation}
for the TTN tensors and
\begin{align}
    \bar\phi_{v_{t_L}}^{(t_L)}:=&\frac{\partial\mathcal{L}}{\partial\phi^{(t_L)}_{v_{t_L}}}=B^{(t)}_{v_{t_L}v_{t_R}v_t}\bar\phi_{v_t}^{(t)}\phi_{v_{t_R}}^{(t_R)}\\
    \bar\phi_{v_{t_R}}^{(t_R)}:=&\frac{\partial\mathcal{L}}{\partial\phi^{(t_R)}_{v_{t_R}}}=B^{(t)}_{v_{t_L}v_{t_R}v_t}\bar\phi_{v_t}^{(t)}\phi_{v_{t_L}}^{(t_L)}\, .
    \label{eq:grad_phi}
\end{align}
for the intermediate results, which now need to be computed in reverse order starting from the root node (see Fig.~\ref{fig:backprop}b). The derivatives of the input $\bar\phi^{(i)}$, $i = 1, \ldots, n$ are not needed and can be skipped. The full Euclidean gradient is then given by assembling the individual derivatives
\begin{equation*}
    \nabla  f (\theta) =(\bar B^{(t)})_{t\in T}\in \mathcal{E}.
\end{equation*}
\begin{figure}
    \centering
    \includegraphics[width=0.7\linewidth]{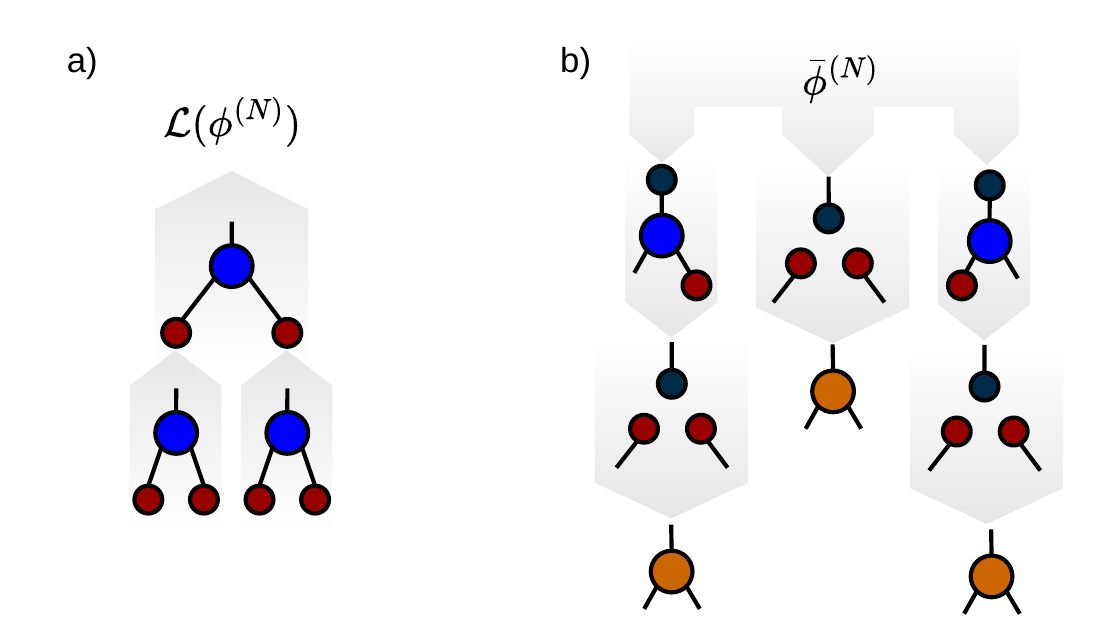}
    \caption{a) Forward propagation. At each node $t$, the corresponding features $\phi^{(t_L)}$ and $\phi^{(t_R)}$ (here in red) are contracted with the tensor $B^{(t)}$ (in blue), which results in a new feature vector $\phi^{(t)}$ (Eq.~\ref{eq:forward}). This feature vector serves as one of the inputs for the parent node in the next layer, indicated by the gray arrow. The final result of the root node $\phi^{(N)}$ (the response) is used to compute the loss $\mathcal{L}$.  b) Backpropagation. The derivative of the loss w.r.t. the response $\bar\phi^{(N)}$ (depicted in dark blue) is inserted into Eq.~\ref{eq:grad_B}-\ref{eq:grad_phi} to compute $\bar\phi^{(t_L)}$ (left), $\bar B^{(t)}$ (center, resulting in the orange tensor) and $\bar\phi^{(t_R)}$ (right), where the intermediate results (red) are reused from the forward propagation. $\bar\phi^{(t_L)}$ and $\bar\phi^{(t_R)}$ are propagated to the nodes in the lower layer where they are needed for the computation of $\bar B^{(t)}$. The three orange tensors constitute the Euclidean gradient $\nabla f$.}
    \label{fig:backprop}
\end{figure}
Note, that as an alternative to Equations~(\ref{eq:grad_B}–\ref{eq:grad_phi}), one can also use automatic differentiation to compute the Euclidean gradient.

Finally, the Riemannian gradient on $\cT$ is given by the projection of the Euclidean gradient $\nabla f$ to the tangent space, since $\cT$ is a Riemannian submanifold of $\mathcal{E}$: $\grad f(\theta)=\Proj_{\theta}(\nabla f(\theta))$. The Riemannian gradient $\grad f(\theta)$ is already the right update direction for Riemannian gradient descent formulated on $\cT$, however it does not adequately represent the Riemannian gradient of the quotient space $\cTG$ with respect to metric $\hat \rho$. This quotient gradient lifted to $\cT$ is actually equal to the projection of $\grad f$ to the horizontal space
\begin{equation} \label{eq:lifted_gradient}
    \lift_{\theta}(\grad \hat f([\theta])) = \Proj^{H}_{\theta}(\grad f(\theta)) = \Proj_{\theta}^H(\nabla f(\theta)),    
\end{equation}
with $\hat f: \cTG \rightarrow \R, \, \hat f = \mathcal L\circ \hat \tau$.
The second identity holds since $H_{\theta}\cT\subset T_{\theta}\cT$. Optimizing according to update directions~\eqref{eq:lifted_gradient} on $\cT$ implicitly represents an optimization in $\cTG$ \cite{Willner2025}.

\subsection{Adaptive Riemannian optimization} \label{sec:radaopt}
Adaptive schemes in optimization on Euclidean spaces work by accumulating information of past iterates to coordinate-wisely rescale the gradient of the current iterate. Given an optimization function $f: \R^n \to \R$ and an initial iterate
$w_0 = (w^{(1)}_0, \ldots, w^{(n)}_0) \in \R^n$, adaptive schemes iterate according to
\begin{equation*}
    w^{(i)}_{k+1} = w^{(i)}_k - \alpha h(g^{(i)}_0, \ldots, g^{(i)}_k) \cdot g_k^{(i)} 
\end{equation*}
where $g_k = \nabla f(w_k)$ and $\alpha$ is some step-size. While $-\alpha g_k^{(i)}$ represents the plain standard gradient descent update direction, the real-valued function $h$ introduces a coordinate-wise rescaling over past gradients. 
More advanced adaptive algorithms such as ADAM~\cite{kingma2015} 
also make use of stabilized update directions, replacing $-\alpha g_k^{(i)} $ with $-\alpha m_k^{(i)}$,
where $m_k = \beta m_{k-1} + (1 - \beta)g_k$ is the momentum for some decay parameter $\beta \in (0, 1)$.  Those two strategies can yield enormous improvements over standard gradient descent, in particular in stochastic optimization settings where $g_k$ are gradient estimates that are sparse but possibly instable. 

Extending this mechanism to Riemannian optimization is no trivial feat, since manifolds do not come with an intrinsic coordinate representation. Here we focus on the techniques presented in~\cite{Becigneul2018}, which apply to the case of a Riemannian manifold $(\mathcal{M}, \rho)$, that factors into a Cartesian product of Riemannian manifolds $(\mathcal{M}^{(i)}, \rho^{(i)})$:
\begin{equation}\label{eq:cartesian_manifold}
    \mathcal{M} = \bigtimes_{i=1}^n \mathcal{M}^{(i)}, \quad \rho = \sum_{i=1}^n \rho^{(i)}.
\end{equation}
Points $w \in \mathcal{M}$ may be written as $w = (w^{(1)}, \ldots, w^{(n)})$, where $w^{(i)} \in \mathcal{M}^{(i)}$ and tangent vectors $\delta w \in T_w\mathcal{M}$ separate as $\delta w = (\delta w^{(1)}, \ldots, \delta w^{(n)})$, where $\delta w^{(i)} \in T_{w^{(i)}}\mathcal{M}^{(i)}$. The individual components serve as block coordinates in adaptive Riemannian schemes, so one iterates
\begin{equation*}
    w_{k+1} = \mathrm R_{w_k}\left((- \alpha h(g^{(i)}_0, \ldots, g^{(i)}_k) \cdot g_k^{(i)} )_{i=1, \ldots, n}\right), 
\end{equation*}
where $g_k = (g_k^{(1)},\ldots, g_k^{(n)}) = \grad f(w_k)$. To ensure compatibility with the manifold structure, the update direction goes through a retraction $\mathrm R$ on $\mathcal{M}$, and of course $h$ has to be adapted to act on block-coordinates instead. 
This framework additionally allows the incorporation of momentum-based updates by employing a vector transport $\mathrm{V}$ on $\mathcal{M}$ via
\begin{equation*}
    m_k = \beta\,\mathrm{V}_{w_{k-1} \to w_k}(m_{k-1}) + (1 - \beta) g_k,
\end{equation*}
which is commonly used in Riemannian versions of ADAM~\cite{Becigneul2018}.

Notably, the above block-coordinate formulas also captures the MUON optimizer~\cite{Keller2024}, which is an adaptive optimizer for the hidden layers of neural networks. It comprehends each weight matrix $W^{(i)}$ and corresponding gradient $G^{(i)}$ as a separate block-coordinate, effectively imposing a Cartesian product of matrix spaces $\mathcal M^{(i)} = \R^{\chi^{\text{in}}_i \times \chi^{\text{out}}_i}$, where
$(\chi^{\text{in}}_i, \chi^{\text{out}}_i)$ is the dimension of the layer $i$. MUON introduces adaptiveness by orthogonalizing each update matrix $G^{(i)}$ individually using the polar decomposition. In its most basic form it iterates according to
\begin{equation*}
    W^{(i)}_{k+1} = W^{(i)}_{k} - \alpha\, \mathrm{qf}(G^{(i)}_k),
\end{equation*}
where $\mathrm{qf}(\cdot)$ refers to the orthogonal factor of the polar decomposition. This update formula manages without a retraction because the considered matrix spaces are Euclidean.

\subsection{Riemannian distance over gradients}
Tuning the learning rate $\alpha$ for stochastic optimizers to a given machine learning task can represent a considerable overhead, both when developing and deploying models. Distance over gradients (DOG)~\cite{Ivgi2023} provides a tuning-free dynamic step-size formula for stochastic gradient descent and it was extended to optimization on manifolds in~\cite{Dodd2024}. Concretely, on a Riemannian manifold $(\mathcal{M}, \rho)$ with sectional curvature $\kappa$, a step-size schedule is given by
\begin{equation} \label{eq:dog}
    \alpha_k = \frac{r_k}{\sqrt{\zeta_\kappa(r_k) \sum_{{k'}=0}^k ||g_{k'}||_{\rho}^2}}, \quad r_k = \max_{{k'} < k} d(w_0, w_{k'})
\end{equation}
where $g_k = \grad f(w_k)$. The expression $d(x_0, x_k)$ is the \emph{geodesic distance} between the initial and $k$-th iterate. The \emph{geometric curvature function} $\zeta_\kappa$ is included to accommodate for manifolds of negative sectional curvature and it reads
\begin{equation*}
    \zeta_\kappa(r) = \begin{cases}
    \frac{r \sqrt{\abs{\kappa}}}{\mathrm{tanh}(r\sqrt{\abs{\kappa}})} &\text{if } \kappa < 0, \\
    1 & \text{otherwise.}
    \end{cases}
\end{equation*}
Formula~\eqref{eq:dog} is applicable to stochastic Riemannian gradient descent $x_{k+1} = \mathrm R_{x_k}(- \alpha_k g_k)$ of a geodesically convex function $f$.
In practical application, the numerator of~\eqref{eq:dog} is taken at least $\varepsilon \in (0,1)$, which represents an initial estimate of the step-size and is used to avoid step-sizes of magnitude zero. While this may seem like an additional hyperparameter, numerical experiments show that DOG-like algorithms react very insensitively to the choice of $\varepsilon$~\cite{Dodd2024}.
Importantly, DOG requires the evaluation of the geodesic distance defined as
\begin{equation*}
    d(x, y) =  \min_\gamma L(\gamma) = \min_{\gamma}\int_a^b \rho_{\gamma(s)}(\gamma'(s), \gamma'(s))^{\frac{1}{2}} \dd s,
\end{equation*}
which is the minimum length of all curves $\gamma:[a, b] \to \mathcal{M}$ connecting
$\gamma(a) = x$ and $\gamma(b) = y$, and the minimizer will be a geodesic on $\mathcal M$. We quickly recapitulate results that exists for the Stiefel and Grassmann manifolds. 
A closed form solution for geodesic distances on $\St(n, k)$ does not exist, but~\cite{Mataigne2026} provides a lower bound. For matrices $X, Y \in \St(n, k)$ it holds that 
\begin{equation} \label{eq:stiefel_dist_bound}
    d^\St(X, Y) \geq q_k(\|X-Y\|_F) := 2 \sqrt{k}\arcsin\left(\frac{\|X-Y\|_F}{2\sqrt{k}}\right).
\end{equation}
The sectional curvature of the Stiefel manifold is bounded from below by $-\tfrac{1}{2}$~\cite[Thm. 10]{Zimmermann2025}.

The Grassmann manifold forms as the quotient $\Gr(n, k) = \St(n, k)/\mathrm{O}(k)$ of the Stiefel manifold with the orthogonal group. It hence contains equivalence classes $[X] = \{ X Q \colon Q \in \mathrm{O}(k) \}$ of matrices $X \in \St(n, k)$ whose columns form an orthonormal basis for the same $k$-dimensional subspace of $\R^n$. An expression for the geodesic distance ~\cite[Section 5.1]{Bendokat2024} in terms of Stiefel representatives $X, Y \in \St(n, k)$ of $[X], [Y] \in \Gr(n,k)$ reads
\begin{equation*}
    d^\Gr([X], [Y]) = \sqrt{\sum_i \arccos(\sigma_i(X^TY))^2}.
\end{equation*}
Here, $\sigma_i$ refers to the $i$-th singular value. The sectional curvature of the Grassmann manifold is non-negative~\cite[Prop. 12]{Bendokat2024}.

\section{Tools}\label{sec:tool}
\subsection{Adaptive optimization of TTNs}

The developments of Section~\ref{sec:radaopt} directly apply to optimization on the TTN-manifold $\cT$. Equations~\eqref{eq:orthogonal_ttn} and~\eqref{eq:euclidean_metric} together attest that $(\cT, \rho)$ fits into the framework~\eqref{eq:cartesian_manifold}, where the individual core tensors $B^{(t)}$ of the TTN  $\theta = (B^{(t)})_{t \in T}$ are comprehended as block coordinates for adaptive Riemannian schemes. Tangent vectors $\xi = (\delta B^{(t)})_{t \in T}$, $\eta = (\delta C^{(t)})_{t \in T}$ in 
$T_\theta \cT$ also represent block-coordinate variations and the metric $\rho$ acts separably on those variations.

Optimization on the quotient $\cTG$ however does not immediately comply with the proposed mechanism of adaptivity, as $\cTG$ intrinsically couples all nodes of the network and does not factor into a Cartesian product of manifolds~\cite{Willner2025}. Still any element $[\theta]\in T_\theta\cTG$ can be identified with a representative $\theta = (B^{(t)})_{t \in T}$, that reads as a tuple of block coordinates. Whats more, both the quotient tangents and the quotient metric can be lifted uniquely to the total space, where they decompose as tensor-wise expressions: 
\begin{equation*}
    \lift_\theta(\hat \xi) = (\delta B^{(t)})_{t \in T} \in H_\theta\cT, \quad \hat \rho_{[\theta]}(\hat \xi, \hat \eta) = 
    \rho_\theta(\lift_\theta(\hat \xi), \lift_\theta(\hat \eta)) = 
    \sum_{t \in T} \langle \delta B^{(t)} | \delta C^{(t)} \rangle.
\end{equation*}
Thus, the practical application of adaptive schemes on $\cTG$ is just as straightforward as it is on~$\cT$, since optimization on $\cTG$ happens only implicitly anyway through representatives $\theta \in \cT$ and horizontal lifts of the quotient gradient~\eqref{eq:lifted_gradient}. 

In the next section we will present several algorithms, like Riemannian ADAM or Riemannian MUON, that employ adaptive schemes in the quotient optimization setting, to which theoretical convergence guarantees like \cite[Theorem 1 \& 2]{Becigneul2018} can be extended. In order to highlight how this may be done, we show convergence of Riemannian ADAGRAD in Appendix~\ref{sec:radagrad_conv}. Being one of the simplest adaptive algorithms, this allows us to put particular focus on the peculiarities involving the TTN quotient.

\subsection{Distance over gradients on TTN-manifolds}
Both for optimization on $\cT$ and optimization $\cTG$, the main obstacle to achieve DOG-like algorithmic is the computation of the geodesic distance.

We begin our considerations for $\cT$, where we again use that it factorizes as a Cartesian product of manifolds equipped with a product metric. For two points $\theta = (B^{(t)})_{t \in T}$ and $\theta' = (C^{(t)})_{t \in T}$ it holds that
\begin{equation} \label{eq:geodesic_distance_T}
    d^{\cT}(\theta, \theta')^2 = \sum_{t \in T^*} d^\St(B^{(t)}, C^{(t)})^2 + ||B^{(N)} - C^{(N)}||^2_F,
\end{equation}
so using~\eqref{eq:stiefel_dist_bound}, the geodesic distance on $\cT$ can be bounded by
\begin{equation*}
    d^{\cT}(\theta, \theta') \geq \sqrt{\sum_{t \in T^*} q_{k_t}(||B^{(t)} - C^{(t)}||_F)^2 + ||B^{(N)} - C^{(N)}||^2_F} \,,
\end{equation*}
which we will use as a proxy for DOG-like algorithms. Compared to standard DOG, this will yield smaller, more conservative step-sizes, because we are underestimating $r_k$ in~\eqref{eq:dog}.

Concerning geodesic distances on $\cTG$, 
for two points $[\theta], [\theta'] \in \cTG$ we consider a connecting geodesic $\hat \gamma:[0, 1] \to\cTG$, as well as its horizontal lift $\gamma$ to $\cT$, that satisfies $\gamma'(s) \in H_{\gamma(s)}\cT$ for all $s \in [0, 1]$. By writing $\gamma(0) = \theta = (B^{(t)})_{t \in T}$, $\gamma(1) = \theta' = (C^{(t)})_{t \in T}$ it holds that
\begin{equation} \label{eq:quotient_geodesic_distance}
    d^\cTG([\theta], [\theta'])^2 = \sum_{t \in T^*} 
    d^\Gr([B^{(t)}], [C^{(t)}])^2
    + ||B^{(N)} - C^{(N)}||^2_F\, .
\end{equation}
For a detailed derivation, see Appendix~\ref{sec:geodesic_distance_quotient}, where we use the fact that $\cTG$ shares its geodesic spray with a Cartesian product of Grassmann manifolds
\begin{equation} \label{eq:grassmann_product}
    \cZ = \left(\bigtimes_{t \in T^*} \Gr(\chi_{t_L}\chi_{t_R},\chi_t)\right) \times \R^{\chi_{N_L}\chi_{N_R}\times d_{\text{out}}}.
\end{equation}
There is still a caveat for the practical application of~\eqref{eq:quotient_geodesic_distance}. It only holds for a specific representation $\theta'\in \cT$ of $[\theta'] \in \cTG$, concretely for $\theta' = \gamma(1)$, the endpoint of the horizontal lift of the geodesic connecting $[\theta]$ and $[\theta']$, to which we do not have access in general.
Indeed, picking a different representative $\tilde \theta'$ with $\pi(\tilde \theta') = [\theta']$ will in general produce a different result when evaluating~\eqref{eq:quotient_geodesic_distance}.
Note however that the application of DOG algorithms~\eqref{eq:dog} only requires the geodesic distances between iterates, which transform into each other through repeated application of retractions. Using the exponential map~\eqref{eq:exp} on $\cTG$ as a retraction ensures that iterates remain in compatible representations. On the other hand, any other retraction produces first-order approximations of geodesics on the quotient, so it is safe to assume that the representations we obtain do not deviate far from their "correct" geodesic representation. Thus, equation~\eqref{eq:quotient_geodesic_distance} still applies as an approximation for arbitrary retractions.

\begin{figure}[h]
    \centering
    \includegraphics[width=\linewidth]{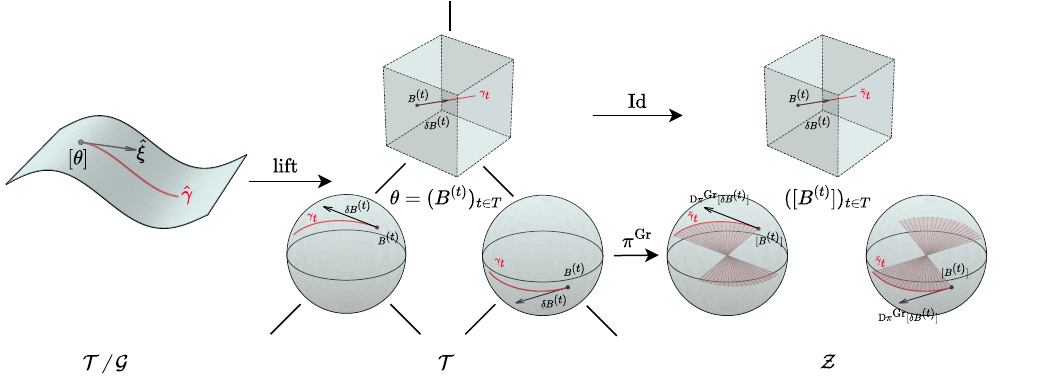}
    \caption{Even though $\cTG$ does not factor, iterates $[\theta]$ and update directions $\hat \xi$ can be lifted to a point $\theta = (B^{(t)})_{t \in T} \in \cT$ and a direction $\xi = (B^{(t)})_{t \in T} \in H_\theta\cT$ that factors. Furthermore, geodesics $\hat \gamma \in \cTG$ ascend to horizontal paths in $(\gamma_t)_{t \in T}\in\cT$ which, at non-root nodes, descend to geodesics $\tilde \gamma_t \in \Gr(\chi_{t_L}\chi_{t_R},\chi_t)$. No quotient is taken at the root node, which resides in a Euclidean space, and where geodesics simplify to straight lines anyway.}
    \label{fig:quotient_chain}
\end{figure}

\section{Algorithms}\label{sec:algorithms}
\subsection{RADAM}
The Riemannian ADAM algorithm (RADAM) for TTNs is presented as Algorithm~\ref{alg:radam} and obtained as an application of~\cite[Alg. 1]{Becigneul2018} to TTNs. Importantly, it covers both optimization on the total space $\cT$, as well as optimization on the quotient $\cTG$. Optimization on the quotient happens only implicitly by using representatives and update directions from the total space and is achieved by picking the tuple of projector and vector transport as $(\mathrm P,\mathrm V) = (\Proj^H, \mathrm V^\cTG)$ according to~\eqref{eq: projector and retraction quotient}. By instead picking $(\mathrm P,\mathrm V) = (\Proj, \mathrm V^\cT)$ as in~\eqref{eq: projector and retraction full}, the quotient structure is ignored and the iteration happens entirely in the total~space. Here, $f_k$ are function oracles of function $f$ given by evaluating the loss only on a random batch of training data.

\begin{figure}[ht]
\centering
\begin{minipage}[t]{.49\textwidth}
\begin{algorithm}[H]
\caption{RADAM for TTNs} \label{alg:radam}
\begin{algorithmic}
    \Require $\theta_1 = (B^{(t)}_1)_{t \in T} \in \cT$, learning rate $\alpha$, decay parameters $\beta_1, \beta_2 \in (0, 1)$, projector P\,, retraction R, vector transport V
    \State Set $m_0 = (0)_{t \in T}, v_0 = (0)_{t \in T}$
    \For{$k = 1, 2, \ldots$}
    \State $(G_k^{(t)})_{t \in T} = \mathrm P (\nabla  f_k(\theta_k))$
    \For{$t \in T$}
        \State $m_k^{(t)} = \beta_1 m_{k-1}^{(t)} + (1 - \beta_1) G_k^{(t)}$
        \State $v_k^{(t)} = \beta_2 v_{k-1}^{(t)} + (1 - \beta_1) \langle G_k^{(t)} \mid G_k^{(t)}\rangle$
        \State $\delta B^{(t)}_{k} = -\alpha\, m_k^{(t)}/\sqrt{v_k^{(t)}}$
    \EndFor
    \State $\theta_{k+1} = \mathrm R_{\theta_k}((\delta B^{(t)}_{k})_{t \in T}) $
    \State $m_{k+1} = \mathrm{V}_{\theta_k \to \theta_{k+1}}(m_k)$
    \EndFor
    \vspace{3mm}
\end{algorithmic}
\end{algorithm}
\end{minipage}
\hfill
\begin{minipage}[t]{.49\textwidth}
\begin{algorithm}[H]
\caption{RDOG for TTNs} \label{alg:rdog}
\begin{algorithmic}
    \Require $\theta_1 = (B^{(t)}_1)_{t \in T} \in \cT$, step-size estimate $\varepsilon$,
    projector P\,, retraction R, geodesic distances $(d^{(t)})_{t \in T}$
    \State Set $r_0 = (\varepsilon)_{t \in T}$, $n_0 = (0)_{t \in T}$, 
    \For{$k = 1, 2, \ldots$}
    \State $(G_k^{(t)})_{t \in T} = \mathrm P (\nabla  f_k(\theta_k))$
    \For{$t \in T$}
        \State $n_{k+1}^{(t)} = n_k^{(t)} + \langle G_k^{(t)} \mid G_k^{(t)}\rangle$
        \State $\delta B^{(t)}_{k} = - r_k^{(t)} G_k^{(t)}/\sqrt{\zeta_\kappa(r_k^{(t)})n_k^{(t)}}$
    \EndFor
    \State $\theta_{k+1} = \mathrm R_{\theta_k}((\delta B^{(t)}_{k})_{t \in T}) $
    \For{$t \in T$}
        \State $r_{k+1}^{(t)} = \max \{r_k^{(t)}, d^{(t)}(B^{(t)}_{0}, B^{(t)}_{k})\}$
    \EndFor
    \EndFor
\end{algorithmic}
\end{algorithm}
\end{minipage}
\end{figure}

\subsection{RDOG}
Algorithm~\ref{alg:rdog} presents an adaptive version of Riemannian distance over gradients (RDOG) for TTNs. It is similar to layer-wise DOG~\cite[Section 4]{Ivgi2023}, in that it keeps a separate tracker of distance $r^{(t)}_k$ for each core tensor $t \in T$, as opposed to a single distance for the whole tensor network. Since both for~\eqref{eq:geodesic_distance_T} and~\eqref{eq:quotient_geodesic_distance}, the total geodesic distances form as the norm of node-wise distances, adaptive RDOG employs exactly those node-wise distances denoted by $(d^{(t)})_{t \in T}$. For our experiments, we pick $d^{(N)}(B^{(t)}, C^{(t)}) = \|B^{(t)}-C^{(t)}\|_F$ at the root tensor, but differ in
\begin{equation} \label{eq:node_distance}
    d^{(t)}(B^{(t)},C^{(t)}) = 
    \begin{cases}
        d^\Gr(\pi^\Gr(B^{(t)}), \pi^\Gr(C^{(t)})) &\text{ on }\cTG,\\
        q_{k_t}(\|B^{(t)}-C^{(t)}\|_F) &\text{ on }\cT,
    \end{cases}
    \qquad\text{for all } t\in T^*.
\end{equation}
Like for RADAM, the projector P is chosen accordingly, depending on whether we optimize on $\cT$ or $\cTG$.

\subsection{RMUON}
We present two Riemannian versions of the MUON optimizer for TTNs. Algorithm~\ref{alg:rmuon} combines MUON updates with momentum, whereas Algorithm~\ref{alg:rmuondog} feeds MUON updates into a DOG-like scheme. 

Recall that MUON takes steps according to orthogonal parts of gradient components. Since $\cT$ forms as a Cartesian product of matrix manifolds, that is,~all tensors in a tree tensor network can canonically be reshaped into a matrices,  node-wise gradients may be orthogonalized according to the polar decomposition to mimic MUON behavior. These orthogonalized directions are even compatible with the quotient $\cTG$: As seen in \cite{Li2026} for any $\delta X \in H_X\St(n,k)$, it holds that $\mathrm{qf}(\delta X) \in H_X\St(n,k)$ as well. Thus, when picking $(\mathrm P,\mathrm V) = (\Proj^H, \mathrm V^\cTG)$ according to~\eqref{eq: projector and retraction quotient}, all update steps of Algorithms~\ref{alg:rmuon} \&~\ref{alg:rmuondog} will reside in $H_{\theta_k}\cT$ and descend to well-formed updates in $T_{[\theta_k]}\cTG$. Again, by employing $(\mathrm P,\mathrm V) = (\Proj, \mathrm V^\cT)$ as in~\eqref{eq: projector and retraction full} instead, the iteration happens entirely in $\cT$, ignoring the quotient.

\begin{figure}[ht]
\centering
\begin{minipage}[t]{.49\textwidth}
\begin{algorithm}[H]
\caption{RMUON for TTNs} \label{alg:rmuon}
\begin{algorithmic}
    \Require $\theta_1 = (B^{(t)}_1)_{t \in T} \in \cT$, learning rate $\alpha$, decay parameter $\beta_1 \in (0, 1)$, projector P\,, retraction R, vector transport V
    \State Set $m_0 = (0)_{t \in T}$
    \For{$k = 1, 2, \ldots$}
    \State $(G_k^{(t)})_{t \in T} = \mathrm P (\nabla  f_k(\theta_k))$
    \For{$t \in T$}
        \State $m_k^{(t)} = \beta_1 m_{k-1}^{(t)} + (1 - \beta_1) G_k^{(t)}$ 
        \State $\delta B^{(t)}_{k} = -\alpha\, \mathrm{qf}(m_k^{(t)})$
    \EndFor
    \State $\theta_{k+1} = \mathrm R_{\theta_k}((\delta B^{(t)}_{k})_{t \in T}) $
    \State $m_k = \mathrm{V}_{\theta_k \to \theta_{k+1}}(m_k)$
    \EndFor
\end{algorithmic}
\end{algorithm}
\end{minipage}
\hfill
\begin{minipage}[t]{.49\textwidth}
\begin{algorithm}[H]
\caption{RMUONDOG for TTNs} \label{alg:rmuondog}
\begin{algorithmic}
    \Require $\theta_1 = (B^{(t)}_1)_{t \in T} \in \cT$, step-size estimate $\varepsilon$,
    projector P\,, retraction R, geodesic distances $(d^{(t)})_{t \in T}$
    \State Set $r_0 = (\varepsilon)_{t \in T}$, $n_0 = (0)_{t \in T}$
    \For{$k = 1, 2, \ldots$}
    \State $(G_k^{(t)})_{t \in T} = \mathrm P (\nabla  f_k(\theta_k))$
    \For{$t \in T$}
        \State $n_{k+1}^{(t)} = n_k^{(t)} + \langle \mathrm{qf}(G_k^{(t)}) \mid \mathrm{qf}(G_k^{(t)})\rangle$
        \State $\delta B^{(t)}_{k} = - r_k^{(t)} \mathrm{qf}(G_k^{(t)})/\sqrt{\zeta_\kappa(r_k^{(t)})n_k^{(t)}}$
    \EndFor
    \State $\theta_{k+1} = \mathrm R_{\theta_k}((\delta B^{(t)}_{k})_{t \in T}) $
    \For{$t \in T$}
        \State $r_{k+1}^{(t)} = \max \{r_k^{(t)}, d^{(t)}(B^{(t)}_{0}, B^{(t)}_{k})\}$
    \EndFor
    \EndFor
\end{algorithmic}
\end{algorithm}
\end{minipage}
\end{figure}

\section{Numerical Experiments}\label{sec:experiments}

To numerically evaluate the proposed methods, they are implemented using the \texttt{pytorch} framework, which allows for automatic differentiation to calculate Euclidean gradients $\nabla f_k(\theta)$ of the objective function on random mini-batches. The objective $f = \mathcal L \circ \tau$ is given by the squared error loss and the batch size is fixed to 256 throughout our considerations.
Initial parameters $\theta_1 \in \cT$ were found using the unsupervised construction algorithm introduced in \cite{Stoudenmire_2018}. On a given architecture, the same initial iterate was used for all optimizers. Learning rates/ step-size estimates were found by conducting a simple grid-search as summarized in the following table, whereas momentum-related hyperparameters remain fixed at $\beta_1=0.9$, $\beta_2=0.999$.
\begin{center}
\begin{tabular}{c|c|c|c|c|c}
    dataset & $\alpha^{\text{ADAM}}$ & $\alpha^{\text{RADAM}}$ & $\alpha^{\text{RMUON}}$ & $\varepsilon^{\text{RDOG}}$ & $\varepsilon^{\text{RMUONDOG}}$ \\ \hline
    Fashion MNIST & 0.001 & 0.03 & 0.007 & 0.1 & 0.1  \\
    CIFAR10 & 0.001 & 0.02 & 0.005 & 0.1 & 0.05  \\
    Imagenette &  0.001 & 0.02 & 0.005 & 0.02 & 0.05 \\
\end{tabular}
\end{center}
For Riemannian optimizers, we componentwisely use the recently proposed POGO algorithm \cite{Javaloy2026} as retraction. It can be understood as an approximation to the polar retraction, that does not enforce strict orthogonality every step, but instead remains close to the orthogonal manifold at all times, in favor of cheaper and more GPU-friendly iteration rules. Furthermore, we omit a detailed discussion of Riemannian algorithms that ignore the quotient structure: As seen in Fig. \ref{fig:cifar10T} in the appendix, they tend to perform very similarly but slightly worse than their quotient counterparts. All experiments were conducted on a \texttt{NVIDIA A30} GPU in single-precision.

\subsection{Fashion MNIST}
As a first test, we use the Fashion MNIST dataset \cite{fashion} that contains 60000 training- and 10000 test images with $28\times28$ pixels and one channel each. As in \cite{Hao2024,Klug2026}, we compress all images to $16\times16$ pixels to reduce the number of features of the TTN model, as we will use each pixel directly as an input feature. The feature map we use to encode the pixels is given by \cite{Klug2026}
\begin{equation*}
    \phi^{(i)}(x_i)=\frac{1}{\sqrt{1+x_i^2}}\begin{pmatrix}1\\x_i\end{pmatrix},
\end{equation*}
meaning that $n=256$ and $d = d_1,...,d_n=2$. For the Fashion MNIST dataset, we choose a maximum bond dimension of $\chi_{\text{max}}=16$.
\begin{figure}
    \centering
    \begin{minipage}{0.5225\textwidth}
        \centering
        \includegraphics[width=\textwidth]{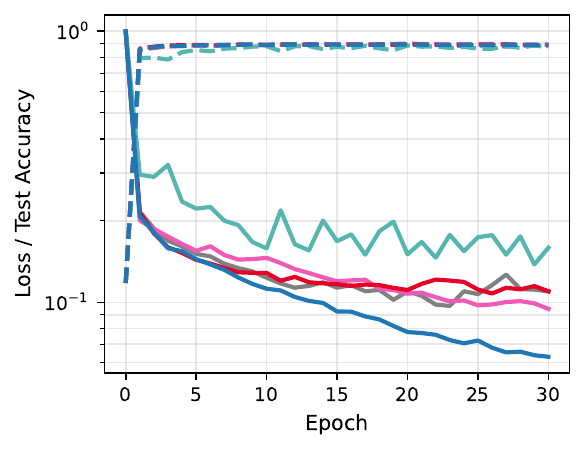}
    \end{minipage}\hfill
    \begin{minipage}{0.4775\textwidth}
        \centering
        \includegraphics[width=\textwidth]{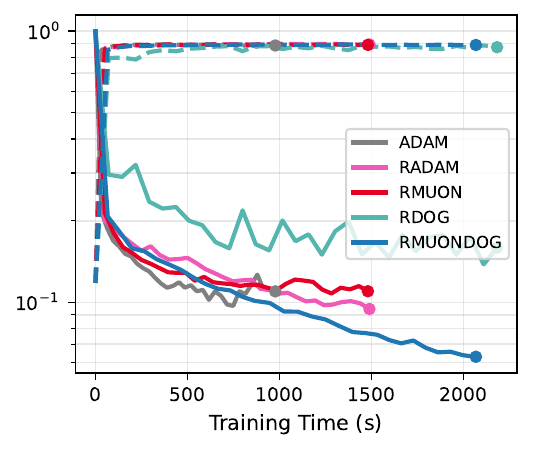}
    \end{minipage}
    \caption{Comparison of the different Riemannian optimizers and the conventional ADAM optimizer through loss (solid) and accuracy (dashed) over epochs (left) and time (right) for the Fashion MNIST dataset.}
    \label{fig:fashion-records}
\end{figure}

Fig.~\ref{fig:fashion-records} shows the results of the different Riemannian optimizers and the conventional ADAM optimizer over 30 training epochs. Note that ADAM does not take the quotient structure into account and simply works with the unconstrained tensors of $\mathcal{E}$. After 30 epochs, all optimizers achieve approximately the same test accuracy of $\sim 89\%$. The lowest training loss was achieved by RMUONDOG, followed closely by RADAM, ADAM \& RMUON. RDOG shows the slowest convergence behavior, but the final test accuracy is still comparable. In terms of runtime, ADAM is clearly in the lead, although the Riemannian optimizers perform competitively. 

At first glance, it seems as if optimizing on $\cT$ and taking the quotient structure into account does not bring a significant advantage, as ADAM outperforms most of the Riemannian optimizers without the additional overhead of projections and retractions. However, for many downstream tasks such as model compression or the computation of explainability measures the TTN must be orthogonal \cite{scharf2026quantuminspiredrobustscalablesar}. Thus, the TTN optimized by ADAM must first be orthogonalized, which accumulates the norm of all non-isometric parts of the tensors in the root tensor. 
Indeed Riemannian optimizers keep the \emph{total norm}\footnote{For orthogonal TTN $\theta\in\cT$, the total norm $||\tau(\theta)||_F$ is equivalent to the norm of the root tensor $||B^{(N)}||_F$ as the isometries simply cancel out. For general TTN $\theta\in\mathcal{E}$ one can compute this quantity by orthogonalizing the TTN first.} $||\tau(\theta)||_F$ of the TTN model $\theta$ stable during training, while the norm explodes when optimizing with ADAM, exceeding the numerical range of \texttt{float32} after just six epochs (see Fig.~\ref{fig:norm} in appendix).
For inference this is not a problem since one can simply work with the numerically stable version $\theta\in\mathcal{E}$ that was learned. However, orthogonalizing this TTN can lead to numerical instabilities, which incur problems in downstream tasks. Fig.~\ref{fig:compression} demonstrates this using the example of model compression \cite{scharf2026quantuminspiredrobustscalablesar}, which results in significantly worse compression accuracies for ADAM when compared to the models that were optimized in $\cT$. 
An explanation for this behavior is the fact that the compression algorithm is based on singular value decompositions of the orthogonality center, which for ADAM is numerically unstable and possibly ill-conditioned due to the exploding norm. Another example for a downstream task would be the computation of feature entropies (see e.g. \cite{scharf2026quantuminspiredrobustscalablesar, felser2020quantuminspired}). It is algorithmically closely related to model compression and applying it to final iterates obtained by ADAM can be expected to yield unreliable values.

\begin{figure*}[t!]
    \centering
    \begin{subfigure}[b]{0.49\linewidth}
        \centering
        \includegraphics[height=2.3in]{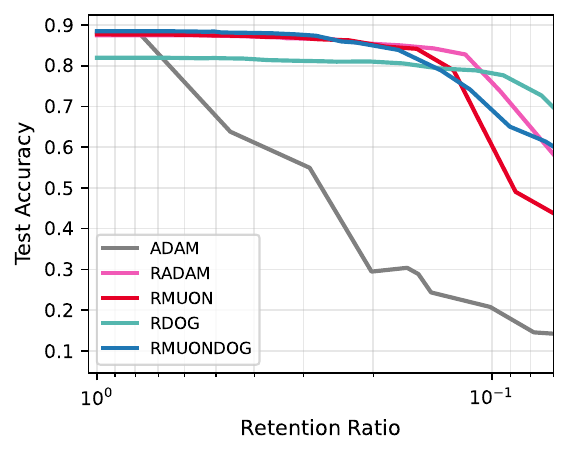}
        \caption{After three epochs.}
    \end{subfigure}
        ~ 
    \begin{subfigure}[b]{0.49\linewidth}
        \centering
        \includegraphics[height=2.3in]{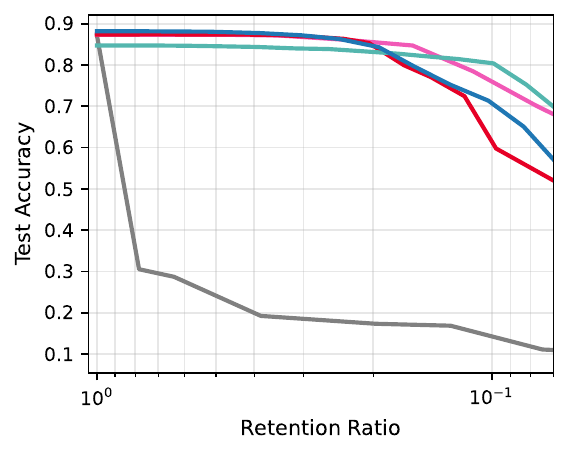}
        \caption{After four epochs.}
    \end{subfigure}
    \caption{Test accuracy for the Fashion MNIST dataset using compressed TTN models that were trained with different optimizers. The retention ratio of the compression is defined as the ratio of kept parameters to the original number of parameters. While the models that were trained with Riemannian optimizers in $\cT$ can be heavily compressed without a significant drop of the test accuracy, this is not the case for the model trained with ADAM. Furthermore, one can see that the problem gets worse with each epoch, as the total norm of the TTN continues to grow and the TTN moves further away from the orthogonal initial parametrization.}
    \label{fig:compression}
\end{figure*}

\subsection{CIFAR10 \& Imagenette}
For the CIFAR10 \cite{Krizhevsky2012} and Imagenette \cite{Howard2019} datasets, we use a hybrid neural network - tensor network architecture, where the neural network acts as a feature extractor and the tensor network as the classification head. The main reason for this is that a full tensor network is not able to generalize well as it lacks the translational invariance that is necessary for this dataset. To preserve the explainability of the tensor network classifier, we need to make sure that the extracted features are meaningful and interpretable, which is not the case for standard neural network backbones. 
Thus, we divide the input image into $p \times p$ \emph{patches}, that are processed separately by the same convolutional neural network (CNN). This produces an embedding vector of dimension $d = 64$ for each patch of the image, that will serve as the input for the tensor network classifier (see Fig.~\ref{fig:cnn-ttn}). 
The important distinction to conventional CNN feature extractors is that no information is exchanged between the individual patches, such that we can clearly trace back each feature vector to a well-defined region of the image. The exchange of information between the patches happens entirely in the tensor network, where one can analyze the learned patterns in a rigorous and precise manner (see e.g. \cite{scharf2026quantuminspiredrobustscalablesar, felser2020quantuminspired}). 
The exact architecture used for both datasets is summarized in the following table.
\begin{center}
\begin{tabular}{c|c|c|c|c|c|c|c}
    dataset & input res. & patches $p$ & conv. layers & sites $n$ & $d$ & $\chi_\mathrm{max}$ & tree-depth\\ \hline
    Fashion MNIST & $16 \times 16$ & N/A & None & 256 & 2 & 16 & 8 \\
    CIFAR10 & $32 \times 32$ & $4 \times 4$ & 4 & 16 & 64 & 12 & 4 \\
    Imagenette &  $160 \times 160$ & $8\times 8$ & 8 & 64 & 64 & 32 & 6\\
\end{tabular}
\end{center}
In all experiments, the CNN feature extractor is optimized simultaneously with the TTN. For the CNN itself, we employ the ADAM optimizer with standard parameters ($\alpha=0.001$, $\beta_1=0.9$ and $\beta_2=0.999$). In order to enrich the empirical data distribution, we augmented training images through random cropping, random erasing, and random horizontal flips.

Inspecting Figs.~\ref{fig:cifar10} \& \ref{fig:Imagenette} yields similar conclusions as for the Fashion MNIST dataset: Riemannian optimizers perform on par with ADAM in terms of test accuracy, with every optimizer reaching just above $\sim 80\%$ for CIFAR10 and just below $\sim 80\%$ for Imagenette, but ADAM is faster as it does not require projections or retractions. The adverse effect of ADAM on downstream compression is considerably less severe in the case of CIFAR10, as seen in Fig.~\ref{fig:compression_cifar}. We attribute this to the fact, that we used a smaller tensor network architecture for CIFAR10 than for Imagenette and Fashion-MNIST. In fact, the tree for the latter is twice as deep as for the former, which possibly amplifies numerical problems for algorithms that heavily rely on hierarchical structure of the network. This argument is supported by compression accuracies recovered from the model trained for Imagenette. Its larger depth renders the final iterate from ADAM incompressible.

\begin{figure}
    \centering
    \includegraphics[width=\linewidth]{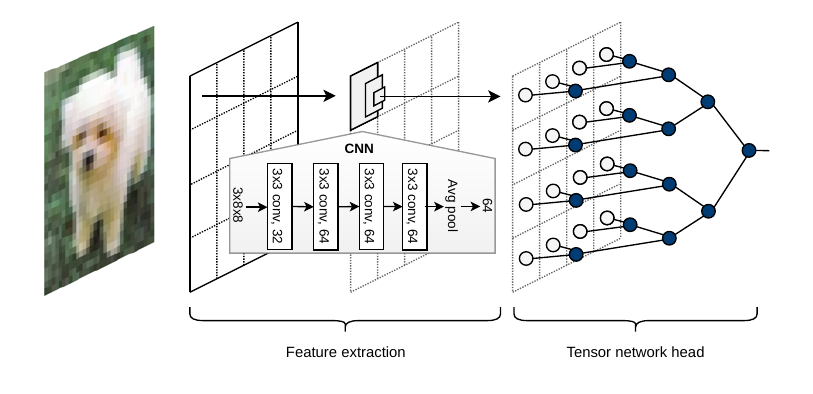}
    \caption{CNN-TTN hybrid architecture as employed for CIFAR10 and Imagenette datasets. The CNN is applied to each patch separately and acts as a feature map for the TTN head.}
    \label{fig:cnn-ttn}
\end{figure}

\begin{figure}
    \centering
    \begin{minipage}{0.5225\textwidth}
        \centering
        \includegraphics[width=\textwidth]{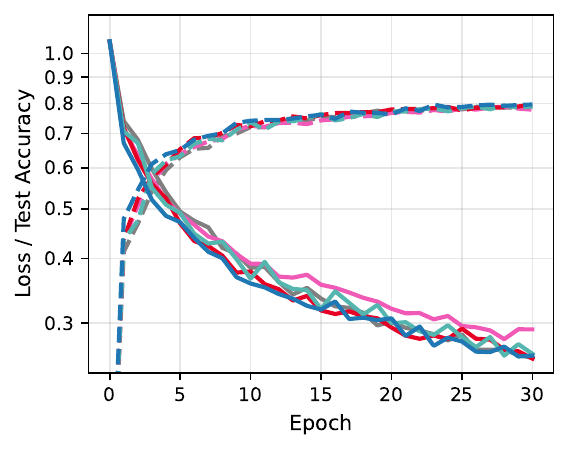}
    \end{minipage}\hfill
    \begin{minipage}{0.4775\textwidth}
        \centering
        \includegraphics[width=\textwidth]{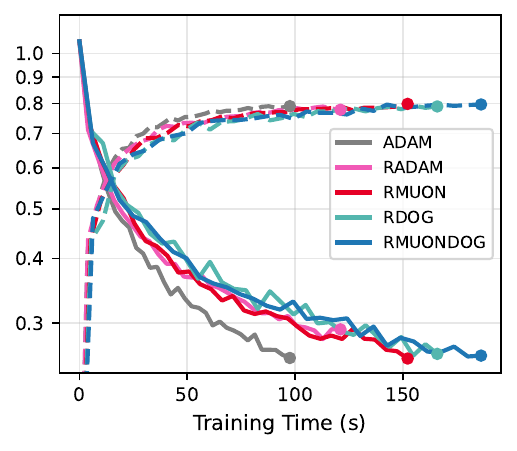}
    \end{minipage}
    \caption{Comparison of optimizers through loss (solid) and accuracy (dashed) over epochs (left) and time (right) on the CIFAR10 dataset.}
    \label{fig:cifar10}
\end{figure}

\begin{figure}
    \centering
    \begin{minipage}{0.5225\textwidth}
        \centering
        \includegraphics[width=\textwidth]{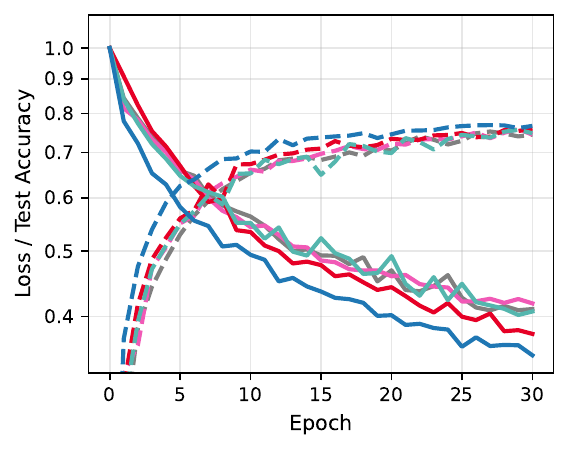}
    \end{minipage}\hfill
    \begin{minipage}{0.4775\textwidth}
        \centering
        \includegraphics[width=\textwidth]{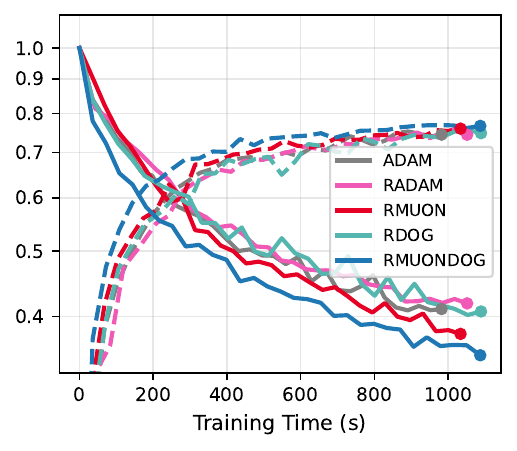}
    \end{minipage}
    \caption{Comparison of optimizers through loss (solid) and accuracy (dashed) over epochs (left) and time (right) on the Imagenette dataset.}
    \label{fig:Imagenette}
\end{figure}

\begin{figure}
    \centering
    \begin{minipage}{0.5\textwidth}
        \centering
        \includegraphics[width=\textwidth]{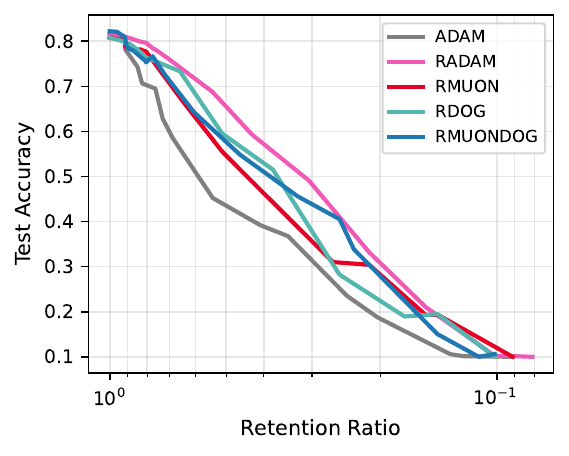}
    \end{minipage}\hfill
    \begin{minipage}{0.5\textwidth}
        \centering
        \includegraphics[width=\textwidth]{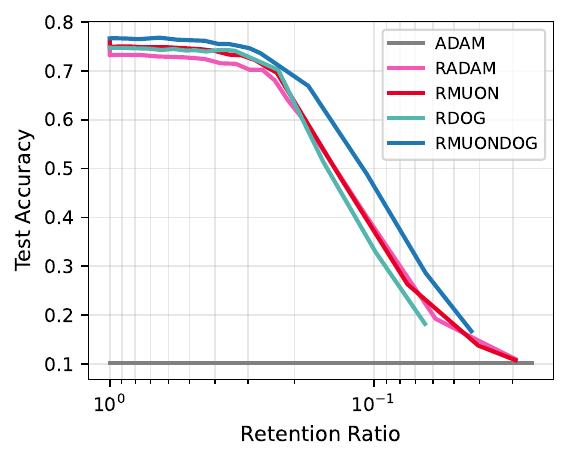}
    \end{minipage}
    \caption{Test accuracy for the CIFAR10 (left) and Imagenette (right) using compressed models that were trained for 30 epochs using different optimizers. Compressing the iterate obtained from ADAM is not possible: The model is rendered useless through orthogonalization alone.}
    \label{fig:compression_cifar}
\end{figure}

\section{Conclusion}\label{sec:conclusions}
In this work, we developed the theoretical framework for stochastic optimization on the manifold of orthogonal TTN for machine learning applications. Furthermore, we introduced a novel hybrid CNN-TTN framework that preserves the key advantages of tensor networks such as explainability and compressibility, while enabling the efficient processing of large-scale images. We compared different stochastic Riemannian optimizers against the conventional unconstrained Adam optimizer on three different image classification tasks.
While the Riemannian optimizers are comparable to Adam in terms of convergence and generalization, the Riemannian optimizers learn a more numerically stable version for downstream tasks. 

The main problem with unconstrained optimization, especially for deep TTN, is that the final iterates can no longer be orthogonalized correctly. Most tensor network algorithms, including the computation of explainability measures and model compression, however, are based on orthogonalization. Since these methods are the key advantages of tensor networks over neural networks, the computational and algorithmic overhead of Riemannian optimization is usually justified. Furthermore, by exploiting a fast approximate POGO retraction for the Stiefel manifold that profits from GPU acceleration, we were able to significantly reduce the computational overhead. In general, there is still room for improvement in the computational efficiency of the Riemannian optimizers, and it remains to be seen whether the gap to unconstrained optimization can be further narrowed so that the additional cost becomes negligible. Further analysis is also necessary in order to understand the origins of the numerical problems that arise from unconstrained optimization, and whether they can be remedied. If, for example, the instabilities of the orthogonalization can be traced back exclusively to the exploding norm of the root tensor, weight decay or other regularization methods could greatly improve stability. Another possibility would be to develop alternative algorithms for downstream tasks that do not rely on orthogonalization.

Another interesting research direction would be to develop stochastic optimization schemes for TN with adaptive bond dimensions, similar to two-site DMRG methods that are widely used in quantum simulation. Not only would this essentially combine optimization and compression into one process, but it also further reduces the number of hyperparameters that need to be tuned for the TTN and is shown to help escape local minima in ground state search\cite{Catarina2023}. It is likely that this would aid generalization in the machine learning setting, as the model is incentivized to only use the truly relevant information.
The main theoretical obstacle is that adapting the bond dimensions during optimization changes the manifold structure, so conventional Riemannian optimization methods do not apply anymore.

Finally, combining the expressive power of neural networks with the interpretability and flexibility of tensor networks has great potential, well beyond image classification. Tensor-network methods have already been explored in settings ranging from high-energy data analysis \cite{felser2020quantuminspired, Borella_2025} and SAR classification \cite{scharf2026quantuminspiredrobustscalablesar} to optimization problems in radiotherapy \cite{Cavinato2021}.
Expanding this approach to other applications and architectures could be a big step towards powerful yet transparent AI technologies.

\section*{Acknowledgements}
The authors would like to thank Tensor AI Solutions GmbH for supporting this work and providing the code
framework that allowed the numerical evaluation of our findings.

\paragraph{Author contributions}
M.S. and M.W. contributed in equal parts to this work. M.W. is responsible for the theoretical developments; M.S. devised the hybrid CNN-TTN architecture and conducted the numerical experiments. A.U. supervised the mathematical formulation and the theoretical foundations. T.F. and M.T. coordinated the overall work and provided the technical background and software on TN-machine learning. All authors contributed to the conceptualization and revision of the work.

\paragraph{Reproducibility statement}
To support reproducibility and verification, the full code used to produce our numerical data will be released upon journal acceptance.

\paragraph{Declaration of AI-assisted tools}
The plotting scripts to visualize numerical data were generated using ChatGPT. The authors remain fully responsible for the content, analyses, interpretations, and conclusions presented in this work.

\paragraph{Funding information}
The work of M.S., M.T., M.W. and T.F. was supported by Tensor AI Solutions GmbH. The work of A.U. was supported
by the Deutsche Forschungsgemeinschaft (DFG, German Research Foundation) – Projektnummer
506561557.

\begin{appendix}
\numberwithin{equation}{section}

\section{Geodesics of the TTN quotient} \label{sec:geodesic_distance_quotient}
The TTN quotient is closely related to the Cartesian product $\cZ$ of Grassmannians defined in~\eqref{eq:grassmann_product}.
Indeed, $\cZ$ is a group quotient to $\cT$, with the same Lie group~\eqref{eq: group G} as employed for $\cTG$, but to a different Lie group action: one that acts tensorwise instead of coupling all tensors. We define the quotient map
\begin{equation*}
    \pi^\cZ: \cT \to \cZ, \qquad (B^{(t)})_{t \in T} \mapsto \begin{cases}
        \pi^\Gr(B^{(t)}) & \text{for } t \in T^*, \\
        B^{{(t)}} &\text{else},
    \end{cases}
\end{equation*}
where
$\pi^\Gr: \St(n,k) \to \Gr(n,k)$ is the Grassmann quotient map.
Importantly, this means that the vertical spaces corresponding to $\cZ$ and $\cTG$ differ, but their horizontal distributions can be defined to coincide by taking the Cartesian horizontal space~\eqref{eq:horizontal_space} in both cases. Based on this, we can derive an important relation between $\cTG$ and $\cZ$, as seen in the following theorem.
\begin{theorem} \label{thm:geodesics}
    Any geodesic $\hat \gamma: [0, 1] \to \cTG$ can be mapped to a geodesic $\tilde \gamma: [0, 1] \to \cZ$ of the same length, in such a way that both geodesics share the same representation in $\cT$, meaning there exists a horizontal curve $\gamma: [0, 1] \to \cT$ that satisfies $\pi(\gamma(s)) = \hat{\gamma}(s)$ and $\pi^\cZ(\gamma(s)) = \tilde \gamma(s)$ for all $s \in [0, 1]$.
\end{theorem}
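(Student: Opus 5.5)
Since $\pi:\cT\to\cTG$ and $\pi^\cZ:\cT\to\cZ$ are Riemannian submersions from the same total space $(\cT,\rho)$ with the same horizontal distribution, I will prove the theorem by the standard O'Neill-type correspondence between geodesics of a quotient and horizontal geodesics of the total space. Let $\hat\gamma$ be a geodesic in $\cTG$. I take its horizontal lift $\gamma:[0,1]\to\cT$ starting at a representative $\theta$ of $\hat\gamma(0)$, i.e.\ the unique curve with $\pi\circ\gamma=\hat\gamma$ and $\gamma'(s)\in H_{\gamma(s)}\cT$ for all $s$. I then define $\tilde\gamma:=\pi^\cZ\circ\gamma$. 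By construction $\gamma$ serves as a common horizontal representation for both curves, so the task reduces to two facts: $\tilde\gamma$ is a geodesic of $\cZ$, and $L(\tilde\gamma)=L(\hat\gamma)$.

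\textbf{Step 1 (horizontal lifts are geodesics upstairs).} For a Riemannian submersion with horizontal distribution $H$, the horizontal lift of a geodesic is a geodesic of the total space. This follows from O'Neill's formula $\nabla_X Y=\widetilde{\hat\nabla_{\hat X}\hat Y}+\tfrac12[X,Y]^{\mathcal V}$ applied along the curve, since the vertical term vanishes when $X=Y=\gamma'$. I will use this for $\pi$, noting that $\hat\rho$ is defined through the Cartesian horizontal space in~\eqref{eq:quotient_metric}, so $\pi$ is a Riemannian submersion. Hence $\gamma$ is a geodesic of $(\cT,\rho)$ that is everywhere horizontal.

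\textbf{Step 2 (horizontal geodesics project to geodesics).} For the converse direction, applied to $\pi^\cZ$, I use that $\pi^\cZ$ is also a Riemannian submersion with the same horizontal space~\eqref{eq:horizontal_space}. This is immediate nodewise, because $\pi^\Gr$ is a Riemannian submersion with horizontal space $H_X\St$ and the root component is the identity. The curve $\gamma$ is a geodesic of $\cT$ and is horizontal for $\pi^\cZ$. The standard result is that the projection of a horizontal geodesic under a Riemannian submersion is a geodesic: its horizontal lift is $\gamma$, and O'Neill's formula gives $\hat\nabla_{\tilde\gamma'}\tilde\gamma'=\DD\pi^\cZ(\nabla_{\gamma'}\gamma')=0$. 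Therefore $\tilde\gamma$ is a geodesic of $\cZ$.

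\textbf{Step 3 (lengths and the main obstacle).} Both submersions are isometries on horizontal vectors, so $\|\hat\gamma'(s)\|_{\hat\rho}=\|\gamma'(s)\|_\rho=\|\tilde\gamma'(s)\|$, which gives equal lengths. The main obstacle is the subtlety that $H\cT$ must be the orthogonal complement of the vertical space for both group actions. This is needed for both submersions to be Riemannian and for O'Neill's formulas to apply. The vertical spaces differ: the vertical space of $\pi$ couples neighbouring nodes, since $A^{(t)}$ acts on $B^{(t)}$ and on its parent. I would check orthogonality directly. A vertical vector for $\cTG$ at node $t$ has the form $B^{(t)}\Omega_t-(\text{terms from child gauges acting on the } v_{t_L},v_{t_R} \text{ legs})$ with $\Omega$ skew. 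Its inner product with a horizontal $\delta B^{(t)}$ satisfying $(B^{(t)})^T\delta B^{(t)}=0$ kills the first term. The child-gauge terms paired against the parent's horizontal component do not obviously vanish, and the root component is unconstrained. Here I would rely on~\cite{Willner2025}, or on the statement made before the theorem that the Cartesian horizontal space is an admissible horizontal space inducing $\hat\rho$. If it is merely complementary rather than orthogonal, I would redo Step 1 directly: show that $s\mapsto\gamma(s)$ has horizontal acceleration zero by computing the geodesic spray nodewise. On each Stiefel factor, a horizontal curve with $\ddot B=-B(\dot B^T\dot B)$ is exactly a Grassmann geodesic representative. This would give the shared-spray claim and conclude the proof.
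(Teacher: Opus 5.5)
\textbf{There is a genuine gap.} It is exactly the obstacle you flag in Step~3, and it cannot be closed. Step~1 needs $\pi:(\cT,\rho)\to(\cTG,\hat\rho)$ to be a Riemannian submersion, i.e.\ $H_\theta\cT=(V_\theta\cT)^\perp$. This is false for the coupled gauge action. Take a skew $\Omega$ acting on the link $v_{N_L}$. It generates the vertical vector that is $B^{(N_L)}\Omega$ at node $N_L$, $-(\Omega\otimes\mathbb{1})B^{(N)}$ at the root, and zero elsewhere. The vector equal to $(\Omega\otimes\mathbb{1})B^{(N)}$ at the root and zero elsewhere is horizontal, since the root is unconstrained. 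The two have inner product $-\|(\Omega\otimes\mathbb{1})B^{(N)}\|_F^2\neq 0$. Equivariance and complementarity of the Cartesian horizontal space are enough to define $\hat\rho$ and to make horizontal lifts length-preserving, but not enough for O'Neill's formula. In fact $\|\DD\pi(\theta)[\xi]\|_{\hat\rho}>\|\xi\|_\rho$ for every $\xi\in(V_\theta\cT)^\perp\setminus H_\theta\cT$. Your Step~2 (that $\pi^\cZ$ is a Riemannian submersion with horizontal space $H_\theta\cT$) and the length equality in Step~3 are fine, but both rest on Step~1.

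\textbf{The fallback does not rescue this.} The computation it calls for comes out nonzero. Comparing the Koszul formulas on $\cT$ and $\cTG$ for horizontal lifts $\xi,\zeta$ gives $\rho\bigl(\Proj^H(\nabla_\xi\xi)-\lift(\hat\nabla_{\hat\xi}\hat\xi),\zeta\bigr)=-\rho([\xi,\zeta]^V,\xi)$. Here $[\xi,\zeta]^V$ is the $V_\theta\cT$-component along $H_\theta\cT$, and orthogonality is precisely what would kill this term. Consider the one-link model $(X,R)\in\St(n,k)\times\R^{k\times m}$ with action $(XQ,Q^TR)$. There the horizontal lift of a $\hat\rho$-geodesic satisfies $\ddot R=0$ and $(\mathbb{1}-XX^T)\ddot X=-2\dot X\,\mathrm{sk}(\dot RR^T)$, with $\mathrm{sk}(M)=\tfrac12(M-M^T)$. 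The Euler--Lagrange equations with transversality at the free endpoint in the orbit give the same equation. For $X=[e_1,e_2]$, $\dot X=[e_3,e_4]\in\R^{4\times2}$, $R=\mathbb{1}_2$, $\dot R=E_{12}$, the right-hand side is $[e_4,-e_3]\neq0$, so $\pi^\Gr\circ X$ is not a Grassmann geodesic. In a TTN with $\chi_t\ge2$ the same happens at the children of the root.

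So the nodewise spray identity holds for auto-parallels of the torsionful connection $\Proj^H\nabla$ of \cite{Willner2025}, not for metric geodesics of $\hat\rho$; the torsion of that connection is not totally skew. The paper argues differently: horizontal lift preserving length, nodewise decoupling of the length minimization, then \cite[Lemma 26.11.4]{Michor2008}. It never claims that $\pi$ is a Riemannian submersion. However, its step ``a minimizer among horizontal Stiefel curves with fixed endpoints is a horizontal geodesic'' meets the same obstruction. Such fixed-endpoint minimizers carry a Lagrange multiplier for the holonomy, which produces exactly the term $-2\dot X\,\mathrm{sk}(\dot RR^T)$. Hence for metric geodesics of $\hat\rho$ the claimed correspondence fails in general. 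No repair of Step~1 can prove the statement as written; it needs an extra hypothesis (e.g.\ $H_\theta\cT\perp V_\theta\cT$) or a different notion of geodesic on $\cTG$.
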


\begin{proof}
Let $\hat{\gamma}(0) = [\theta]$ and $\hat{\gamma}(1) = [\theta']$. 
Since geodesics are characterized as curves that locally minimize length, we assume that $L(\hat \gamma) = d^\cTG([\theta], [\theta'])$. If this assumption is not met,  subdivide $\hat \gamma$ into sufficiently short segments that each fulfill it, and apply the upcoming argument to all segments.
According to~\cite[Prop. II.3.1]{Kobayashi1963} for any representative $\theta \in \cT$ of $[\theta]$, there exists a unique horizontal lift $\gamma$ of $\hat{\gamma}$ which satisfies
$\hat \gamma(s) = \pi(\gamma(s))$, $\gamma(0) = \theta$ and
$\gamma'(s) = \lift_{\gamma(s)}(\hat \gamma'(s)) \in H_{\gamma(s)}\cT$. Since 
horizontal lifts of curves preserve lengths~\cite[Lem. 26.11.1]{Michor2008}, the curve $\gamma$ will be the solution of $\min_{\gamma \text{ horizontal}} L(\gamma)$.
By the form of the horizontal space~\eqref{eq:horizontal_space}, for $\gamma = (\gamma_{(t)})_{t \in T}$, each individual component except for the root is horizontal, meaning $\gamma'_{(t)}(s) \in H_{\gamma_{(t)}(s)}\St(\chi_{t_L}\chi_{t_R},\chi_t)$. As $\cT$ forms as a Cartesian product (meaning that in particular the Riemannian metric decouples) and since there are no coupling constraints between components of $\gamma$, each non-root component will be the solution of $\min_{\gamma_{(t)}\text{ horizontal}}L(\gamma_{(t)})$ with the corresponding endpoints and the root component of $\gamma$ will solve
$\min_{\gamma_{(N)}} L(\gamma_{(N)})$.
Thus, the $\gamma_{(t)}$ of each non-root node is a horizontal geodesic in $\St(\chi_{t_L}\chi_{t_R},\chi_t)$. Since $H_{\gamma_{(t)}(s)}\St$ is orthogonal to the vertical space associated with the Grassmannian, $\pi^\Gr(\gamma_{(t)})$ is a geodesic in $\Gr(\chi_{t_L}\chi_{t_R},\chi_t)$~\cite[Lemma 26.11.4]{Michor2008}. Applying this logic to all non-root components, shows that $\tilde \gamma(s)$ is a geodesic in $\cZ$, and its length evaluates as $L(\tilde \gamma) = L(\gamma)$. 
\end{proof}

At this point, we remind of the nuance concerning the full multilinear rank constraints of $\cTG$ (see Section \ref{sec:quotientspace}): Not all geodesics in $\cZ$ lift to geodesics in $\cT$ that satisfy it, meaning they have no correspondence in $\cTG$. 
Thus the above argument cannot be reversed. Still, the equivalence of geodesics can also be constructed with the following high-level argument. If $\nabla$ is the Levi-Civita connection on $\cT$, then the Levi-Civita connection $\nabla^\cZ$ on $\cZ$ is given by $$\lift^\cZ(\nabla^\cZ_U V) = \Proj^H(\nabla_{\lift^\cZ(U)} \lift^\cZ(V))$$ for $U, V$ vector fields on $\cZ$. This follows since $H_\theta\cT$ is orthogonal to the vertical spaces corresponding to $\cZ$~\cite[Thm. 9.34]{Boumal2023}. The same formula also yields a connection $\nabla^\cTG$ on $\cTG$ given by $$\lift(\nabla^\cTG_U V) = \Proj^H(\nabla_{\lift(U)} \lift(V))$$ for $U, V$ vector fields on $\cTG$~\cite[Thm. 3]{Willner2025}. Even though this connection has torsion, because geodesics only depend on the symmetric part of a connection, the geodesic spray of this connection will coincide with the Levi-Civita one of $\cTG$. Thus it is clear that $\cTG$ and $\cZ$ share the same geodesics, given through the same horizontal parametrization in $\cT$.

This argument also implies that geodesic distances and exponential maps of $\cTG$ and $\cZ$ share representations in $\cT$.
Indeed, an important corollary of the above theorem is equation~\eqref{eq:quotient_geodesic_distance}. It can be found by employing that 
\begin{equation*}
    L(\hat \gamma)^2 = L(\gamma)^2 = \sum_{t \in T^*} L(\gamma_{(t)})^2 + L(\gamma_{(N)})^2,
\end{equation*}
and plugging in the respective expressions for geodesic distances. Furthermore, we can conclude that the exponential map on $\cTG$ is related to the Grassmann exponential map, which in term of Stiefel representatives reads
\begin{equation*}
        \mathrm{Exp}^\Gr_X(\delta X) = X V \cos(\Sigma) V^T + U \sin(\Sigma) V^T,
    \end{equation*}
where $\delta X = U \Sigma V^T$ is the compact SVD of $\delta X \in T_X\St(n,k)$.

\begin{corollary}
Let $\theta = (B^{(t)})_{t \in T}$ be a representative of $[\theta] \in \cTG$ and let the horizontal lift of $\hat \xi \in T_{[\theta]}\cTG$ be given by $\lift_\theta(\hat \xi) = (\delta B^{(t)})_{t \in T}$.
    The exponential map on $\cTG$ reads
    \begin{equation} \label{eq:exp}
        \mathrm{Exp}_{[\theta]} = \pi \circ \mathrm{R}^\mathrm{Exp}_\theta \circ \lift_\theta,
    \end{equation}
    where 
    \begin{equation*}
        \mathrm{R}^\mathrm{Exp}_\theta(\lift_\theta(\hat \xi)) = 
        \begin{cases}
            \mathrm{Exp}^\Gr_{B^{(t)}}(\delta B^{(t)}) & \text{if } t \in T^*,\\
            B^{(t)} + \delta B^{(t)} & \text{else}.
        \end{cases}
    \end{equation*}
\end{corollary}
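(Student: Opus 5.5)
The plan is to characterise $\mathrm{Exp}_{[\theta]}(\hat\xi)$ as $\hat\gamma(1)$, where $\hat\gamma$ is the unique geodesic of $\cTG$ with $\hat\gamma(0)=[\theta]$ and $\hat\gamma'(0)=\hat\xi$. The argument then works entirely with its horizontal lift $\gamma$ through the chosen representative $\theta$. By \cite[Prop. II.3.1]{Kobayashi1963}, this lift exists, is unique, and satisfies $\gamma(0)=\theta$ and $\gamma'(0)=\lift_\theta(\hat\xi)=(\delta B^{(t)})_{t\in T}$. Because $\pi\circ\gamma=\hat\gamma$, it will suffice to show that $\gamma(1)=\mathrm R^{\mathrm{Exp}}_\theta(\lift_\theta(\hat\xi))$ componentwise, after which applying $\pi$ gives \eqref{eq:exp}.

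\textbf{Identifying the components of $\gamma$.} I would invoke Theorem~\ref{thm:geodesics}, or more directly the connection argument that follows it, which shows that $\cTG$ and $\cZ$ have the same geodesic spray in the common horizontal parametrisation. Thus the horizontal curve $\gamma$ is simultaneously the horizontal lift (with respect to $\pi^\cZ$) of the $\cZ$-geodesic $\tilde\gamma=\pi^\cZ\circ\gamma$, with initial velocity $\lift^\cZ_\theta$ equal to the same tuple $(\delta B^{(t)})$. Since $\cZ$ is a Riemannian product, $\tilde\gamma$ splits into independent geodesics. For $t\in T^*$ these are Grassmann geodesics $\tilde\gamma_t$ starting at $\pi^\Gr(B^{(t)})$ with horizontal velocity $\delta B^{(t)}\in H_{B^{(t)}}\St$. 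The root component is a Euclidean straight line $s\mapsto B^{(N)}+s\,\delta B^{(N)}$, which already settles the root case of the formula.

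\textbf{Non-root nodes.} For $t\in T^*$ the component $\gamma_{(t)}$ is horizontal in $\St(\chi_{t_L}\chi_{t_R},\chi_t)$ with respect to $\pi^\Gr$, since the Cartesian horizontal space \eqref{eq:horizontal_space} restricts to $H\St$ at each node. It also projects to $\tilde\gamma_t$ and starts at $B^{(t)}$. By uniqueness of horizontal lifts it is therefore the Stiefel horizontal lift of the Grassmann geodesic. The known formula for this lift is $s\mapsto B^{(t)}V\cos(s\Sigma)V^T+U\sin(s\Sigma)V^T$, where $\delta B^{(t)}=U\Sigma V^T$ is the compact SVD (\cite{Bendokat2024}). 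Evaluating at $s=1$ gives $\mathrm{Exp}^\Gr_{B^{(t)}}(\delta B^{(t)})$ as a Stiefel representative.

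\textbf{Main obstacle.} The delicate point is that the representative at $s=1$ matters. Theorem~\ref{thm:geodesics} by itself only matches curves up to projection, while \eqref{eq:exp} must produce the specific representative $\gamma(1)$ in order to be consistent with the later use of \eqref{eq:quotient_geodesic_distance}. I would resolve this with the uniqueness of horizontal lifts for the two different quotient maps $\pi$ and $\pi^\cZ$, using that they share the same horizontal distribution. I would also check that $U\sin(\Sigma)V^T$ with $B^{(t)T}U=0$ keeps the curve horizontal and on the Stiefel manifold. A short direct computation of $\gamma_{(t)}^T\gamma_{(t)}=\mathbb 1$ and $\gamma_{(t)}^T\gamma_{(t)}'=0$ confirms this. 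Finally, the rank caveat of Section~\ref{sec:quotientspace} needs no separate treatment, because we start from an existing $\cTG$-geodesic and only assert the formula along its lift.
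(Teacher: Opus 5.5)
Your proposal is correct and follows essentially the same route as the paper. You lift $\hat\gamma$ horizontally through $\theta$, use Theorem~\ref{thm:geodesics} to see that $\pi^{\cZ}\circ\gamma$ consists of Grassmann geodesics together with a straight line at the root, read off $\gamma(1)$ componentwise, and project with $\pi$. Your explicit use of uniqueness of $\pi^{\Gr}$-horizontal lifts, and your check that $s\mapsto B^{(t)}V\cos(s\Sigma)V^T+U\sin(s\Sigma)V^T$ is a horizontal Stiefel curve, just spell out the paper's terse step ``by lifting all components''.
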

\begin{proof}
    Theorem~\ref{thm:geodesics} shows that when fixing a base point $\theta$, any geodesic $\hat \gamma: [0, 1] \to \cTG$ can be mapped to a geodesic $\tilde \gamma: [0,1] \to \cZ$, 
    such that both geodesics share the same representative $\gamma: [0, 1] \to \cT$ at all points, meaning $\pi(\gamma(s)) = \hat \gamma(s)$ and $\pi^\Gr(\gamma_{(t)}(s)) = \tilde \gamma_{(t)}(s)$ for all $s \in [0, 1]$. 
    Furthermore if $\hat \gamma'(0) = \hat \xi$, then $\gamma'(0) = \lift_\theta(\hat \xi) = (\delta B^{(t)})_{t \in T}$ and $\tilde \gamma_{(t)}'(0) = \DD \pi^\Gr(B^{(t)})[\delta B^{(t)}]$ for all $t \in T^*$. 
    Because $\tilde \gamma$ is a geodesic, it holds that $\tilde \gamma_{(t)}(1) = \mathrm{Exp}_{B^{(t)}}(\DD \pi^\Gr(B^{(t)})[\delta B^{(t)}])$. By lifting all components, we see that $\gamma(1) = \mathrm{R}^\mathrm{Exp}_\theta((\delta B^{(t)})_{t \in T}) = \mathrm{R}^\mathrm{Exp}_\theta(\lift_\theta(\hat \xi))$. 
    Finally, going back to the TTN quotient, it holds that $\hat \gamma (1) = \pi(\mathrm{R}^\mathrm{Exp}_\theta(\lift_\theta(\hat \xi)))$. Thus by definition $\pi \circ \mathrm{R}^\mathrm{Exp}_\theta \circ \lift_\theta$ is the exponential map on $\cTG$.
\end{proof}

\section{Convergence of RADAGRAD} \label{sec:radagrad_conv}
Let $\hat f: \cTG \rightarrow \R, \, \hat f = \mathcal L\circ \hat \tau$ that is accessible through function oracles $\hat f_k$ by evaluating the loss only on a random batch of training data. In this section we present a proof of convergence for RADAGRAD on the TTN quotient manifold, by bounding the regret 
\[
R_K = \sum_{k=1}^K \hat f_k ([\theta_k]) - \hat f_k([\theta_*]),
\]
where $[\theta_*]=  \mathrm{argmin}_{[\theta] \in \cTG} \sum_{k=1}^K \hat f_k([\theta])$ is an oracle minimizer . The proof can be extended to cover more advanced algorithms like RADAMNC or RAMSGRAD by going along the lines of~\cite{Becigneul2018}. Here we focus on the less involved RADAGRAD to more clearly state the peculiarities concerning quotient optimization.
\begin{algorithm}
\caption{RADAGRAD for TTNs} \label{alg:radagrad}
\begin{algorithmic}
    \Require $\theta_1 = (B^{(t)}_1)_{t \in T} \in \cT$, learning rate $\alpha$, projector P\,, retraction R
    \State Set $v_0 = (0)_{t \in T}$
    \For{$k = 1, 2, \ldots$}
    \State $(G_k^{(t)})_{t \in T} = \mathrm P (\nabla  f_k(\theta_k))$
    \For{$t \in T$}
        \State $v_k^{(t)} = v_{k-1}^{(t)} + \langle G_k^{(t)} \mid G_k^{(t)}\rangle$
        \State $\delta B^{(t)}_{k} = -\alpha\, G_k^{(t)}/\sqrt{v_k^{(t)}}$
    \EndFor
    \State $\theta_{k+1} = \mathrm R_{\theta_k}((\delta B^{(t)}_{k})_{t \in T}) $
    \EndFor
\end{algorithmic}
\end{algorithm}
\noindent We make the following assumptions:
\begin{enumerate}[label=(A\arabic*)]
    \item The step-size $\alpha$ of Algorithm~\ref{alg:radagrad} satisfies $\alpha \|G_1^{(t)}\|/\sqrt{v_1^{(t)}} \leq \frac{\pi}{2}$ for all $t \in T^*$ and $G_k^{(t)}$ remains bounded for $t \in T$.
    \item Algorithm~\ref{alg:radagrad} runs employing the projector $\mathrm{P} = \Proj^H$ and the retraction $\mathrm{R} = \mathrm{R}^\mathrm{Exp}$ that descends to the exponential map on $\cTG$.
    \item $[\theta_*]$ and all iterates $[\theta_k]$ of Algorithm~\ref{alg:radagrad} reside in a compact and geodesically convex subset of $\cTG$ with diameter bounded by $D_\infty$, and $\hat f$ is geodesically convex on that subset.
\end{enumerate}
A few comments about the validity of the assumptions are in order. Assumption (A1) essentially requires that all gradients remain bounded, which is a standard assumption for regret-based convergence proofs. In this case, a step size $\alpha > 0$ can be found that satisfies the first condition.  Assumption (A2) considers the exponential map as a retraction. Since other retractions on $\cTG$ actually match with the exponential map up to second order, Assumption (A2) is still satisfied in approximation for those. Finally, the existence of geodesically convex subsets on $\cTG$ on which $\hat f$ is geodesically convex as required in 
(A3) can be guaranteed at least locally around points in which the Riemannian Hessian $\mathrm{Hess}\, \hat f$ is positive definite (see \cite{Boumal2023} for a definition of the Riemannian Hessian). In the case of strongly convex loss functions $\mathcal L$ like the $\mathcal L_2$ loss, an interesting case arises if the oracle minimizer $[\theta_*]$ provides a critical point of~$\mathcal L$. The following result is in analogy to~\cite[Theorem~2.11]{Rohwedder2013}. 

\begin{proposition}\label{prop:Riemannian Hessian}
    Let $\hat f = \mathcal{L} \circ \hat \tau : \cTG \to \R$ and let $\mathcal{L}:\R^{d_1\times\cdots\times d_n\times d_{\text{out}}}\to \R$ be a twice differentiable strongly convex function. Then $\mathrm{Hess}\, \hat f([\theta_*])$ is positive definite at any point $[\theta_*] \in \cTG$ satisfying $\nabla\mathcal{L}(\hat \tau([\theta_*])) = 0$.
\end{proposition}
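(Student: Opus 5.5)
The plan is to reduce the Riemannian Hessian at the critical point to a Euclidean second derivative of $\mathcal L$ along the image of $\hat\tau$. Strong convexity of $\mathcal L$ then gives a lower bound of the form $\mu\|\DD\hat\tau[\hat\xi]\|^2$. This leaves one thing to show: the differential of $\hat\tau$ is injective on $T_{[\theta_*]}\cTG$. Throughout, I would work through representatives $\theta_*\in\cT$ and horizontal lifts, as elsewhere in the paper, with $\xi=\lift_{\theta_*}(\hat\xi)=(\delta B^{(t)})_{t\in T}\in H_{\theta_*}\cT$.

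\emph{Step 1 (critical point).} By the chain rule, $\DD\hat f([\theta_*])[\hat\xi]=\langle\nabla\mathcal L(\hat\tau([\theta_*])),\DD\hat\tau([\theta_*])[\hat\xi]\rangle=0$, so $[\theta_*]$ is a critical point of $\hat f$. Equivalently, by \eqref{eq:lifted_gradient}, $\grad\hat f([\theta_*])=0$.

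\emph{Step 2 (Hessian at a critical point).} At a critical point, the Riemannian Hessian quadratic form does not depend on the connection or on the curve chosen. For any smooth curve $\hat c$ with $\hat c(0)=[\theta_*]$ and $\hat c'(0)=\hat\xi$, we have $\hat\rho(\mathrm{Hess}\,\hat f[\hat\xi],\hat\xi)=\frac{\dd^2}{\dd s^2}\hat f(\hat c(s))|_{s=0}$; the acceleration term is multiplied by the vanishing gradient (cf.\ \cite{Boumal2023}). I would take $\hat c=\pi\circ c$ with $c(s)=\mathrm R_{\theta_*}(s\xi)$. Writing $x(s)=\tau(c(s))$ and differentiating $\mathcal L(x(s))$ twice gives
\[
\frac{\dd^2}{\dd s^2}\hat f(\hat c(s))\Big|_{s=0}=\DD^2\mathcal L(x(0))[\dot x(0),\dot x(0)]+\langle\nabla\mathcal L(x(0)),\ddot x(0)\rangle .
\]
The second term vanishes by hypothesis. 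Strong convexity with modulus $\mu>0$ then yields $\hat\rho(\mathrm{Hess}\,\hat f[\hat\xi],\hat\xi)\ge\mu\|\DD\tau(\theta_*)[\xi]\|_F^2$.

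\emph{Step 3 (injectivity; main obstacle).} It remains to show that $\DD\tau(\theta_*)[\xi]\neq0$ for every nonzero $\xi\in H_{\theta_*}\cT$, i.e.\ that $\hat\tau$ is an immersion. This is where the full multilinear rank assumption of Section~\ref{sec:quotientspace} enters. I would first try to cite it directly from \cite{Uschmajew:2013,Willner2025}, where the TTN manifold is shown to be an immersed/embedded manifold of fixed-rank tensors. If a self-contained argument is needed, I would proceed as follows.
\begin{itemize}
\item Write $\DD\tau(\theta_*)[\xi]=\sum_{t\in T}\tau_t$, where $\tau_t$ is the network with $B^{(t)}$ replaced by $\delta B^{(t)}$.
\item Show the $\tau_t$ are pairwise Frobenius-orthogonal. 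All non-root cores are isometries, so in $\langle\tau_t,\tau_{t'}\rangle$ everything below the lower node contracts to identities. If one node is an ancestor of the other, the descendant node $s$ then appears as a factor $(B^{(s)})^T\delta B^{(s)}=0$, by the definition of the horizontal space. If the nodes lie in disjoint subtrees, both factors $(B^{(t)})^T\delta B^{(t)}$ and $(B^{(t')})^T\delta B^{(t')}$ appear and vanish.
\item Compute $\|\tau_t\|_F^2=\mathrm{tr}\big((\delta B^{(t)})^T\delta B^{(t)}\,\rho_t\big)$, where $\rho_t$ is the reduced "density" matrix on the link $v_t$, obtained by contracting everything above $t$ (ending at the root) with its copy.
\item Full multilinear rank towards $v_t$ means exactly that $\rho_t$ is positive definite. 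Hence $\|\tau_t\|_F=0$ forces $\delta B^{(t)}=0$; for the root, $\tau_N$ has norm $\|\delta B^{(N)}\|_F$.
\end{itemize}
Summing the orthogonal pieces gives $\|\DD\tau(\theta_*)[\xi]\|_F^2\ge\lambda\|\xi\|^2$, where $\lambda=\min\{1,\min_t\lambda_{\min}(\rho_t)\}>0$. Together with Step 2, this gives $\mathrm{Hess}\,\hat f([\theta_*])\succeq\mu\lambda\,\mathrm{id}$.

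I expect the orthogonality bookkeeping in the ancestor case and the identification of $\rho_t$'s definiteness with the rank condition to need the most care.
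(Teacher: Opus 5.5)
Your proof is correct, and its skeleton matches the paper's. Both arguments expand $\hat f$ to second order along a curve through $[\theta_*]$ and kill the first-order term with $\nabla\mathcal L(\hat\tau([\theta_*]))=0$. Both then bound what remains by the Euclidean Hessian of $\mathcal L$.

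The paper does this with a geodesic $\hat\gamma$ of $\cTG$, which removes the acceleration term on the quotient side. It pushes $\hat\gamma$ forward to $\Gamma=\hat\tau\circ\hat\gamma$ on the hierarchical-rank manifold $\mathcal H=\hat\tau(\cTG)$. It then uses that, at a critical point, the Riemannian gradient on the embedded $\mathcal H$ vanishes and its Riemannian Hessian coincides with the Euclidean one.

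You instead first show that $[\theta_*]$ is critical for $\hat f$, which lets you use any curve. You then differentiate $\mathcal L\circ\tau\circ c$ directly in the ambient space, so you never need $\mathcal H$ or its Hessian.

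The substantive difference is your Step 3. The paper's final step, from positive definiteness of $\mathrm{H}_{\mathcal L}$ to that of $\mathrm{Hess}\,\hat f$, tacitly needs $\Gamma'(0)=\DD\hat\tau([\theta_*])[\hat\gamma'(0)]\neq 0$. That is, $\hat\tau$ must be an immersion, which the paper inherits silently from identifying $\cTG$ with $\mathcal H$ via \cite{Uschmajew:2013}. You make this explicit and prove it from the full-rank condition, and your bookkeeping is sound:
\begin{itemize}
\item For $t\neq t'$, one of the two nodes is not an ancestor of the other. It is therefore not the root, and it contributes the vanishing factor $(B^{(s)})^T\delta B^{(s)}$.
\item Positive definiteness of $\rho_t$ is exactly rank $\chi_t$ of the $t$-matricization, because the subtree below $t$ contracts to an isometry.
\end{itemize}

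What your route buys is a self-contained argument and the quantitative bound $\mathrm{Hess}\,\hat f([\theta_*])\succeq\mu\lambda\,\mathrm{id}$. The paper's version is shorter but outsources the immersion property to the literature.
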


\begin{proof}
Define the space $\mathcal{H} \coloneq \hat \tau(\cTG) \subset \R^{d_1\times\cdots\times d_n\times d_{\text{out}}}$ as the manifold of tensors with fixed hierarchical rank $(\chi_t)_{t \in T^*}$ \cite{Uschmajew:2013}.
    Let $\hat \gamma: [0, 1] \to \cTG$ be a geodesic on the quotient with $\hat{\gamma}(0) = [\theta_*]$, and define $\Gamma = \hat \tau \circ \hat \gamma$ the respective curve in low-rank tensor manifold $\mathcal{H}$, so we have the equality $\hat f \circ \hat \gamma  = \mathcal L \circ \Gamma$. Then according to Taylor's theorem on Riemannian manifolds~\cite[Eq.~5.26]{Boumal2023}, it holds that 
    \begin{equation*}
        (\hat f \circ \hat \gamma)''(0) = \hat \rho_{\hat \gamma(0)} (  \mathrm{Hess}\, \hat f(\hat \gamma(0))[\hat \gamma'(0)] , \hat \gamma'(0)) +  \hat \rho_{\hat \gamma(0)}( \grad \hat f(\hat \gamma(0)) , \hat \gamma''(0)),
    \end{equation*}
    where $\hat \rho$ denotes the quotient metric \eqref{eq:quotient_metric}. Because $\hat \gamma$ is a geodesic, its velocity is constant, and the second term vanishes. We apply Taylor's theorem again to find
    \begin{equation*}
        (\mathcal L \circ \Gamma)''(0) = \langle  \Gamma'(0) | \mathrm{Hess}\, \mathcal{L}(\Gamma(0)) | \Gamma'(0)\rangle + \langle  \grad \mathcal{L }(\Gamma(0)) | \Gamma''(0)\rangle.
    \end{equation*}
    Here, in slight abuse of notation $\mathrm{Hess}\, \mathcal{L}$ (resp. $ \grad \mathcal{L }$) denotes the Riemannian Hessian (resp. gradient) of $\mathcal{L}$ restricted to $\mathcal{H}$. Recall that $\nabla\mathcal{L}(\hat \tau([\theta_*])) = 0$. Since $\mathcal{H}$ is an embedded submanifold of $\R^{d_1\times\cdots\times d_n\times d_{\text{out}}}$ equipped with the Euclidean metric \cite{Uschmajew:2013}, we can conclude from~\cite[Prop. 3.61]{Boumal2023} that $\grad \mathcal{L }(\Gamma(0)) = 0$ as well and from~\cite[Eq.~ 5.34]{Boumal2023} that $\mathrm{Hess}\, \mathcal{L}$ coincides with the Euclidean Hessian $\mathrm{H}_\mathcal{L}$ on the respective tangent space of $\mathcal H$ at $\Gamma(0)$. By assumption, $\mathrm{H}_\mathcal{L}$ is positive definite, therefore $\mathrm{Hess}\, \hat f(\hat \gamma(0))$ is positive definite as well.
\end{proof}

\begin{theorem}
Let the iterates $\theta_1, \theta_2, \ldots, \theta_K \in \cT$ generated by Algorithm~\ref{alg:radagrad} be representatives of $[\theta_1]$, $[\theta_2], \ldots, [\theta_K] \in \cTG$.
Under assumptions (A1-A3), the regret on the quotient is bounded by  
\begin{equation*}
    R_K = \sum_{k=1}^K \hat f_k ([\theta_k]) - \hat f_k ([\theta_*]) \leq 
    \left(\frac{D_\infty^2}{2\alpha} + \alpha\right) \sum_{t \in T} \sqrt{\sum_{k=1}^K \|G^{(t)}_{k}\|^2 } \; .
\end{equation*}
For $K \to \infty$, this implies convergence of objective values in expectation.
\end{theorem}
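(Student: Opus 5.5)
The plan is to follow the regret analysis of \cite[Theorem 1]{Becigneul2018} for RADAGRAD, carried out on the quotient. Theorem~\ref{thm:geodesics} and the corollary on the exponential map let us do every computation blockwise on the product of Grassmannians $\cZ$ through horizontal representatives in $\cT$.

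\emph{Step 1 (convexity).} By (A3), geodesic convexity of $\hat f_k$ on the compact set gives
\[
\hat f_k([\theta_k]) - \hat f_k([\theta_*]) \le -\hat\rho_{[\theta_k]}\bigl(\grad \hat f_k([\theta_k]), \mathrm{Exp}^{-1}_{[\theta_k]}([\theta_*])\bigr).
\]
Lifting to $\theta_k$ via \eqref{eq:lifted_gradient} with $\mathrm P=\Proj^H$, the gradient lift is $(G^{(t)}_k)_{t\in T}$. The lift of $\mathrm{Exp}^{-1}_{[\theta_k]}([\theta_*])$ is some horizontal $(-E^{(t)}_k)_{t\in T}$. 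By Theorem~\ref{thm:geodesics} and \eqref{eq:quotient_geodesic_distance}, each $E^{(t)}_k$ is the lift of a Grassmann logarithm at node $t$, or a Euclidean difference at the root. The quotient metric splits as the sum of $\langle G^{(t)}_k\mid E^{(t)}_k\rangle$, so the regret becomes a sum over nodes of linear terms.

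\emph{Step 2 (per-node cosine law).} By (A2) and the corollary, the update $\theta_{k+1}=\mathrm R^{\mathrm{Exp}}_{\theta_k}(\delta B_k)$ realizes the quotient exponential map. Its components are Grassmann exponential steps $\delta B^{(t)}_k=-\alpha_k^{(t)}G^{(t)}_k$ with $\alpha_k^{(t)}=\alpha/\sqrt{v^{(t)}_k}$, and a Euclidean step at the root. The Grassmannian has nonnegative sectional curvature, and on the root the geometry is flat. So the comparison (cosine) inequality gives, for each $t$,
\[
d_t(k+1)^2 \le d_t(k)^2 + (\alpha_k^{(t)})^2\|G^{(t)}_k\|^2 - 2\alpha_k^{(t)}\langle G^{(t)}_k\mid E^{(t)}_k\rangle ,
\]
where $d_t(k)$ is the node-$t$ component of the distance in \eqref{eq:quotient_geodesic_distance}. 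Nonnegative curvature makes the constant $\zeta=1$, which is why no curvature factor appears. Assumption (A1) keeps the step length at most $\pi/2$, inside the injectivity radius of the Grassmannian, so logarithms are well defined. Rearranging gives
\[
\langle G^{(t)}_k\mid E^{(t)}_k\rangle \le \frac{d_t(k)^2-d_t(k+1)^2}{2\alpha_k^{(t)}}+\frac{\alpha_k^{(t)}}{2}\|G^{(t)}_k\|^2 .
\]

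\emph{Step 3 (AdaGrad summation).} Sum over $k$ for each $t$.
\begin{itemize}[label={}]
\item \emph{Telescoping term.} Since $1/\alpha_k^{(t)}$ is nondecreasing and $d_t(k)\le D_\infty$, Abel summation bounds this part by $\frac{D_\infty^2}{2\alpha}\sqrt{v^{(t)}_K}$.
\item \emph{Gradient term.} The standard estimate $\sum_k \|G_k\|^2/\sqrt{\sum_{j\le k}\|G_j\|^2}\le 2\sqrt{\sum_k\|G_k\|^2}$ bounds this part by $\alpha\sqrt{v^{(t)}_K}$.
\end{itemize}
Summing over $t\in T$ yields the stated bound with $\sqrt{v^{(t)}_K}=\sqrt{\sum_k\|G^{(t)}_k\|^2}$. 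By (A1) each such term is $O(\sqrt K)$, so $R_K/K\to0$. For i.i.d.\ minibatch oracles, the standard online-to-batch argument then gives convergence of expected objective values.

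\emph{Main obstacle.} The delicate point is representative consistency across iterations. The per-node distances $d_t(k)$ and the lifts $E^{(t)}_k$ must refer to the representative of $[\theta_*]$ reached by horizontally lifting the quotient geodesic from the current $\theta_k$. That representative may change with $k$, because the tensorwise Grassmann gauge differs from the coupled gauge of $\mathcal G$. The telescoping sum, however, needs the node-$t$ distance at step $k+1$ in Step 2 to be the one used at step $k+1$ in Step 1.

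To handle this, I would work entirely in $\cZ$. Theorem~\ref{thm:geodesics} says the quotient geodesic from $[\theta_k]$ to $[\theta_*]$ maps to a $\cZ$-geodesic of equal length, and the update is a $\cZ$-exponential step. Each $d_t(k)$ is a genuine Grassmann distance between $\pi^\Gr(B^{(t)}_k)$ and the $t$-component of the image of $[\theta_*]$, and is therefore gauge-invariant at node $t$. I would then argue that the endpoint's $\cZ$-image is well defined along the iteration, or at least that any discrepancy only decreases the distance to the fixed point. This is the step I would check most carefully, and I would be prepared to need an additional assumption here.
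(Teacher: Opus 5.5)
Your Steps 1--3 follow the paper's proof essentially line by line. Convexity is lifted to $H_{\theta_k}\cT$, so the regret splits into node-wise pairings of $G^{(t)}_k$ with the lifted logarithm. A node-wise cosine inequality then holds on $\Gr(\chi_{t_L}\chi_{t_R},\chi_t)$, using nonnegative curvature and step lengths kept below the injectivity radius $\pi/2$ by (A1), together with the flat computation at the root. The argument ends with Abel summation against $D_\infty$ and Auer's lemma.

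The obstacle you single out is genuine, and the paper's proof does not resolve it either. It fixes one representative $\theta_*=(B^{(t)}_*)_{t\in T}$ and uses $[B^{(t)}_*]=\mathrm{Exp}_{[B^{(t)}_k]}(\DD\pi^{\Gr}(B^{(t)}_k)[\Delta^{(t)}_k])$ for every $k$. This presupposes that, for all $k$, the horizontal lift from $\theta_k$ of the geodesic to $[\theta_*]$ ends at a point with the same node-wise Grassmann classes and the same root tensor. The telescoping in \eqref{eq:regret1} rests on exactly this.

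The resolution you hope for, that the $\cZ$-image of this endpoint stays fixed along the iteration, fails in general. The reason is that $\pi^{\cZ}$ does not factor through $\pi$, so there is no well-defined "image of $[\theta_*]$" in $\cZ$.

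\begin{itemize}
\item Write $\theta^{(k)}_*$ and $\theta^{(k+1)}_*$ for the endpoints of the lifts from $\theta_k$ and $\theta_{k+1}$. Both lie in the fibre over $[\theta_*]$, and equivariance of horizontal lifts gives $\theta^{(k+1)}_*=\mathcal{A}\cdot\theta^{(k)}_*$. Here $\mathcal{A}\in\mathcal{G}$ is the holonomy of the Cartesian horizontal distribution around the loop $[\theta_k]\to[\theta_{k+1}]\to[\theta_*]\to[\theta_k]$.
\item On a lowest-layer node, where $\mathcal{G}$ acts only from the right, $A^{(t)}$ is the Grassmann holonomy of the projected closed loop. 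This is generically nontrivial when $1<\chi_t<\chi_{t_L}\chi_{t_R}$.
\item At the parent $p$ of such nodes, $A^{(p_L)}\otimes A^{(p_R)}$ multiplies $B^{(p)}$ from the left and rotates its column space. The same mechanism moves the root tensor.
\end{itemize}

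Hence, for nodes above the lowest layer and for the root, the target in the step-$k$ cosine inequality differs from the one used at step $k+1$, and the per-node sums do not telescope. Since the holonomy is of first order in the step length, a crude estimate of the resulting drift terms is of the same order per step as $\|G^{(t)}_k\|$. Such terms cannot be absorbed into the $O(\sqrt{K})$ bound.

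So neither your argument nor the paper's proves the inequality without an extra hypothesis. One sufficient hypothesis is that $\pi^{\cZ}(\theta^{(k)}_*)$ is independent of $k$; under it your Steps 1--3 go through verbatim and give the stated bound.
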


\begin{proof}
Consider the iterates $[\theta_k], [\theta_{k+1}]$ and the oracle minimizer $[\theta_*]$. Their representatives are denoted by $\theta_k = (B^{(t)}_{k})_{t \in T}$, $\theta_{k+1} = (B^{(t)}_{k+1})_{t \in T}$ and 
$\theta_* = (B^{(t)}_{*})_{t \in T}$. It holds that $\theta_{k+1} = \mathrm R_{\theta_k}((-\alpha\, G_k^{(t)}/(v_k^{(t)})^{1/2})_{t \in T})$, where $(G_k^{(t)})_{t \in T} = \lift_{\theta_k}(\grad \hat f([\theta_k]))$. Let $\hat \gamma: [0, 1] \to \cTG$ be a geodesic connecting $[\theta_k]$ and $[\theta_*]$. We denote $\gamma$ the horizontal lift of $\hat{\gamma}$, and we set $(\Delta_k^{(t)})_{t \in T} = \gamma'(0) = \lift_{\theta_k}(\hat \gamma'(0)) \in H_{\theta_k}\cT$. By employing the geodesic convexity of $\hat f$ and lifting the metric we find that
\begin{align} \label{eq:regret}
    \sum_{k=1}^K \hat f_k ([\theta_k]) - \hat f_k ([\theta_*]) \leq \sum_{k=1}^K \hat \rho_{[\theta_k]}( - \grad \hat f([\theta_k]), \hat \gamma'(0)) 
    = \sum_{k=1}^K \sum_{t \in T} \langle - G_k^{(t)} | \Delta_k^{(t)} \rangle.
\end{align}
We now let $[B^{(t)}_{k}] = \pi^\Gr(B^{(t)}_{k})$, $[B^{(t)}_{k+1}] = \pi^\Gr(B^{(t)}_{k+1})$ and $[B^{(t)}_{*}] = \pi^\Gr(B^{(t)}_{*})$. As seen in Theorem~\ref{thm:geodesics}, any non-root component of $\gamma$ descends to a geodesic $\pi^\Gr(\gamma_{(t)})$ on the Grassmannian $\Gr(\chi_{t_L}\chi_{t_R},\chi_t)$, therefore
\begin{align*}
    [B^{(t)}_{*}] &= \mathrm{Exp}_{[B^{(t)}_{k}]}(\DD \pi^\Gr(B^{(t)}_{k})[\Delta^{(t)}_{k} ]), \\
    [B^{(t)}_{k+1}] &= \mathrm{Exp}_{[B^{(t)}_{k}]}(-\alpha/(v_k^{(t)})^{1/2}\, \DD \pi^\Gr(B^{(t)}_{k})[G^{(t)}_{k} ]).
\end{align*}
The second equality follows from the special choice of our retraction (A2). The geometric interpretation of the metric then yields
\begin{align*}
    \langle -\alpha/(v_k^{(t)})^{1/2} G^{(t)}_{k} | \Delta_k^{(t)}\rangle 
    &=
    \rho^\Gr_{[B^{(t)}_{k}]}(-\alpha/(v_k^{(t)})^{1/2}\, \DD \pi^\Gr(B^{(t)}_{k})[G^{(t)}_{k} ], \DD \pi^\Gr(B^{(t)}_{k})[\Delta^{(t)}_{k} ]) \\
    &= d^\Gr([B^{(t)}_{k}], [B^{(t)}_{k+1}]) d^\Gr([B^{(t)}_{k}], [B^{(t)}_{*}])\cos(\angle G^{(t)}_{k} \Delta_k^{(t)}).
\end{align*}
The Grassmann manifold is of non-negative sectional curvature~\cite[Prop.~4.1]{Bendokat2024}. Thus, the cosine law known from Euclidean spaces directly extends, and we can write 
\begin{align*}
    2\langle -\alpha/(v_k^{(t)})^{1/2} G^{(t)}_{k} | \Delta_k^{(t)}\rangle 
    &\leq d^\Gr([B^{(t)}_{k}], [B^{(t)}_{*}])^2 - d^\Gr([B^{(t)}_{k+1}], [B^{(t)}_{*}])^2 + d^\Gr([B^{(t)}_{k}], [B^{(t)}_{k+1}])^2 \\
    &= d^\Gr([B^{(t)}_{k}], [B^{(t)}_{*}])^2 - d^\Gr([B^{(t)}_{k+1}], [B^{(t)}_{*}])^2 + \alpha^2/v_k^{(t)} \langle  G^{(t)}_{k} | G^{(t)}_{k}\rangle.
\end{align*}
In the last equality, we used Assumption (A1), and that the injectivity radius of the Grassmann manifold is $\frac\pi2$. (A1) applies to all iterates instead of only the first one, as $\|G_k^{(t)}\|/\sqrt{v_k^{(t)}}$ is a decreasing sequence. At the root node, which resides in a linear space, a similar consideration holds. Using the notation from~\eqref{eq:node_distance}, we can subsume
\begin{equation}\label{eq: preestimate}
\begin{aligned}
    \sum_{k=1}^K \sum_{t \in T} \langle - G_k^{(t)} | \Delta_k^{(t)} \rangle 
    \leq \sum_{k=1}^K \sum_{t \in T} &
    \frac{(v_k^{(t)})^{1/2}}{2 \alpha} (d^{(t)}(B^{(t)}_{k}, B^{(t)}_{*})^2 - d^{(t)}(B^{(t)}_{k+1}, B^{(t)}_{*})^2) \\
    &+ \frac{\alpha}{2 (v_k^{(t)})^{1/2}} \|G^{(t)}_{k}\|^2 .
\end{aligned}
\end{equation}
Recall that $d^{(t)}(B^{(t)}_{k}, B^{(t)}_{*})^2 \leq \sum_{t \in T} d^{(t)}(B^{(t)}_{k}, B^{(t)}_{*})^2 = d^{\cTG}([\theta_k], [\theta_*])^2 \leq D_\infty^2$ by (A3), which we can use to further bound the first term on the right side of~\eqref{eq: preestimate}. Since $v_0^{(t)} = 0$ and $v_k^{(t)}$ is monotonically increasing, we can conclude that
\begin{equation} \label{eq:regret1}
\begin{split}
    & \sum_{t \in T} \sum_{k=1}^K 
    \frac{(v_k^{(t)})^{1/2}}{2 \alpha} (d^{(t)}(B^{(t)}_{k}, B^{(t)}_{*})^2 - d^{(t)}(B^{(t)}_{k+1}, B^{(t)}_{*})^2) \\
    & = \frac{1}{2\alpha} \sum_{t \in T} \sum_{k=1}^K ((v_k^{(t)})^{1/2} - (v_{k-1}^{(t)})^{1/2}) d^{(t)}(B^{(t)}_{k}, B^{(t)}_{*})^2 \\
    & \leq \frac{1}{2\alpha} \sum_{t \in T} \sum_{k=1}^K ((v_k^{(t)})^{1/2} - (v_{k-1}^{(t)})^{1/2}) D_\infty^2 \\
    &= \frac{D_\infty^2}{2\alpha} \sum_{t \in T} (v_K^{(t)})^{1/2} = \frac{D_\infty^2}{2\alpha} \sum_{t \in T}\sqrt{\sum_{k=1}^K \|G^{(t)}_{k}\|^2 } \; .
\end{split}
\end{equation}
For the second term in~\eqref{eq: preestimate}, Auer's lemma \cite{Auer2002} yields that
\begin{align} \label{eq:regret2}
    \sum_{t \in T}\sum_{k=1}^K\frac{\alpha}{2 (v_k^{(t)})^{1/2}} \|G^{(t)}_{k}\|^2 = 
    \frac{\alpha}{2 } \sum_{t \in T}\sum_{k=1}^K\frac{\|G^{(t)}_{k}\|^2}{\sqrt{\sum_{j=0}^k \|G^{(t)}_{j}\|^2 }} \leq 
    \alpha \sum_{t \in T} \sqrt{\sum_{k=1}^K \|G^{(t)}_{k}\|^2 } \; .
\end{align}
Plugging~\eqref{eq: preestimate},~\eqref{eq:regret1} \&~\eqref{eq:regret2} into~\eqref{eq:regret} yields the required result.
\end{proof}

\section{Additional numerical results}
\begin{figure}[H]
    \centering
    \begin{minipage}{0.5225\textwidth}
        \centering
        \includegraphics[width=\textwidth]{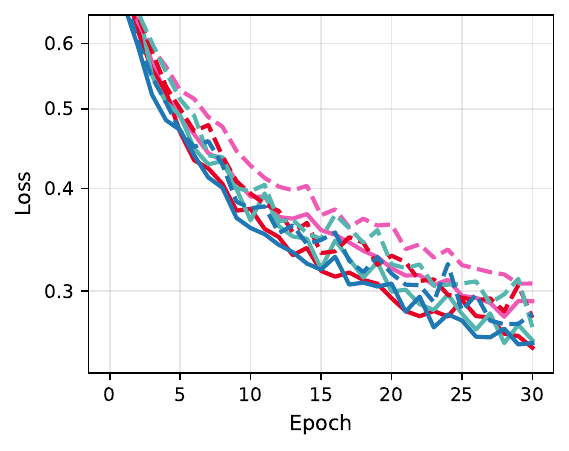}
    \end{minipage}\hfill
    \begin{minipage}{0.4775\textwidth}
        \centering
        \includegraphics[width=\textwidth]{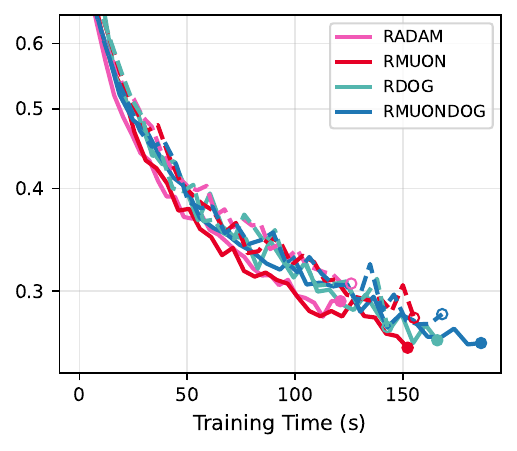}
    \end{minipage}
    \caption{Comparison of optimizers iterating on $\cT$ (dashed) versus optimizers respecting the quotient $\cTG$ (solid), exemplified on the CIFAR10 dataset. Algorithms on $\cT$ perform slightly worse in terms of final loss and test accuracy (not displayed). RADAM and RMUON are slightly faster on $\cTG$ because $\Proj^H$ is cheaper than $\Proj$, DOG-like algorithm are faster on $\cT$ because the evaluation of $q_k$ is cheaper than $d^\Gr$.}
    \label{fig:cifar10T}
\end{figure}

\begin{figure}[H]
    \centering
    \includegraphics[width=0.55\linewidth]{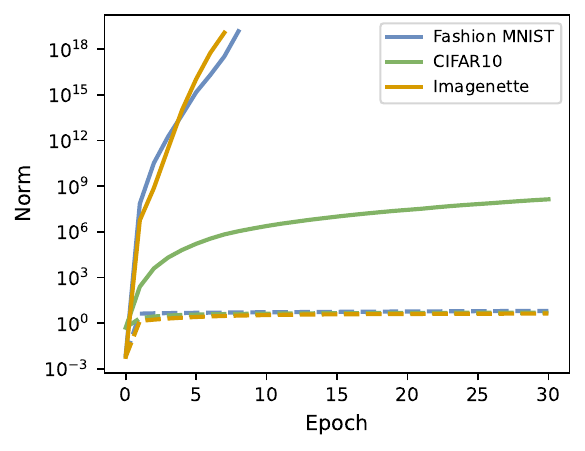}
    \caption{Development of the tensor norm $\|\tau(\theta_k)\|_f$ for ADAM (solid) and RADAM (dashed). Exploding norms hint at numerical problems for algorithms that act on $\tau(\theta)$, like compression. Riemannian optimizers (expemplified by RADAM) keep this norm stable.}
    \label{fig:norm}
\end{figure}

\end{appendix}

\printbibliography
\end{document}